\def\M{\mathcal{ M}}
\def\G{\mathcal{ G}}
\def\R{\mathbb{ R}}
\def\E{\mathbb{ E}}
\def\P{\mathbb{ P}}
\def\C{\mathcal{ C}}
\def\Z{\mathbb{ Z}}
\newtheorem{theorem}{Theorem}[section]
\newtheorem{proposition}[theorem]{Proposition}
\newtheorem{lemma}[theorem]{Lemma}
\newtheorem{remark}[theorem]{Remark}
\newtheorem{definition}[theorem]{Definition}
\date{}
\title{A Russo Seymour Welsh Theorem  for \\ critical site percolation on \(\Z^2\)}
\author{Xiaolin ZENG}
\begin{document}
\maketitle


\begin{abstract}
The Russo-Seymour-Welsh Theorem for \(\Z^2\) bond or \(\mathbb{T}\) (triangular lattice) site percolation states that at criticality, for all fixed real \(\lambda\), the probability of the existence of a horizontal occupied crossing of each rectangle with size \(n\times \lambda n\) is not degenerated when \(n\) tends to infinity. Turning to site percolation on \(\Z^2\), where the self duality does not hold anymore, we prove that the analogue statement of the RSW Theorem will still be true in this case.

The proof uses appropriate finite size criteria and a result of Kesten which allows us to extend existing crossings without losing too much probability. As a consequence, there is no infinite cluster at criticalty.

Our object in this short paper is twofold. Our first goal is to give a proof of a RSW Theorem for \(\Z^2\) site percolation.
Since the proof uses in an essential way a celebrated result by Kesten on the so called "box crossing", our second goal in this paper is to present a self-contained proof of this theorem in perhaps a more accessible language.
\end{abstract}

\section{Introduction and statement}
The Russo Seymour Welsh(RSW) box-crossing Theorem plays a key role in the theory of two-dimensional percolation, for example, it is used in the famous proof of Cardy's formula on triangular lattice\cite{smirnov2001critical}.

For site percolation on triangular lattice and bond percolation on square lattice, there is a nice proof involving planar duality. For other models where there is no such helpful property, one has to deal with them in different ways. A remarkable result of G.Grimmett and I.Manolescu shows that this remains true for a rich family of models~\cite{grimmett2012bond}.

The current dissertation gives a proof of the following RSW Theorem for site percolation on the square lattice.
The proof follows a scheme which was suggested to us by Hugo Duminil-Copin.


\begin{theorem}[RSW Theorem for critical {\bf site} percolation on $\Z^2$]\label{th.main}
  For any \(\lambda >0\), there exists \(c=c(\lambda)>0\), such that for all \(n\geq 1\)  
  
\begin{equation*}
c\leq \P_{p_c}[\text{ there exists a horizontal crossing of any } n \text{ by } \lambda n \text{ box }]\leq 1-c \,,
\end{equation*}
where $\P_{p_c}$ stands for critical site percolation on $\Z^2$. 
\end{theorem}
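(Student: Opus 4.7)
The plan is to deduce Theorem~\ref{th.main} from two ingredients: uniform bounds of the form $\P_{p_c}[\text{horizontal crossing of } n \times n] \in [\epsilon_0, 1-\epsilon_0]$, established by finite-size criteria, and Kesten's box-crossing extension theorem, which upgrades square bounds to rectangles of arbitrary aspect ratio. Write $f_\lambda(n) := \P_{p_c}[\text{horizontal occupied crossing of } [0,n]\times[0,\lambda n]]$ and let $f^*_\lambda(n)$ denote the analogous quantity for the matching model, i.e.\ vacant sites on the $*$-adjacency lattice at density $1-p_c$. Planar topology gives, for every rectangle $R$,
\begin{equation*}
\P_{p_c}[\text{no horizontal occupied crossing of } R] = \P_{p_c}[\exists \text{ vertical vacant } *\text{-crossing of } R],
\end{equation*}
which, combined with the $90^\circ$ rotational symmetry of the lattice, yields the identity $f_\lambda(n) + f^*_{1/\lambda}(\lambda n) = 1$; in particular $f_1(n) + f^*_1(n) = 1$.

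The first step is to show that $f_1(n) \in [\epsilon_0, 1-\epsilon_0]$ for all $n$. Adapting to $\Z^2$ site percolation a Grimmett-Marstrand / Menshikov-style renormalization, one checks that if $f_1(n_0) < \epsilon_0$ at a single scale $n_0$ then the occupied configuration exhibits exponential decay of connectivities at $p_c$; by continuity of finite-volume crossing probabilities in $p$, this decay would persist slightly above $p_c$ and contradict the existence of an infinite occupied cluster for $p > p_c$. The reverse inequality follows symmetrically: if $f_1(n_0) > 1-\epsilon_0$, then $f^*_1(n_0) < \epsilon_0$ by the identity above, and the same renormalization applied to the $*$-vacant configuration at density $1-p_c$ produces a contradiction on the white side.

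Given these uniform square bounds, I would invoke Kesten's box-crossing theorem --- the second, self-contained topic of the paper --- to obtain $f_\lambda(n) \geq c(\lambda)$ and $f^*_{1/\lambda}(\lambda n) \geq c(\lambda)$ for every fixed $\lambda > 0$. The gluing procedure combines horizontal crossings of two overlapping $n \times n$ squares through the FKG inequality together with a reflection-conditioning argument on the lowest crossing, and applies equally well to the occupied model and to the $*$-vacant one (with the appropriate reversal of monotonicity). The topological identity then converts the second lower bound into the desired upper bound $f_\lambda(n) \leq 1 - c(\lambda)$.

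The main obstacle is the absence of self-duality for $\Z^2$ site percolation: unlike for bond percolation on $\Z^2$ or site percolation on the triangular lattice, one cannot invoke the convenient involution $p \leftrightarrow 1-p$, and both the finite-size criteria and Kesten's gluing argument must alternate explicitly between ordinary paths on $\Z^2$ and $*$-paths on the matching lattice. This careful bookkeeping is precisely what motivates the paper's revisit of Kesten's box-crossing theorem in a more transparent language.
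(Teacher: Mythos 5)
Your proposal follows essentially the same route as the paper: matching-pair duality (\(\Z^2\) versus \(\Z^{2,*}\)) to convert the upper bound into a lower bound for vacant \(*\)-crossings, a finite-size criterion combined with a continuity-in-\(p\) argument to obtain a scale-uniform crossing bound at criticality on each graph of the pair, and Kesten's extension theorem to reach arbitrary aspect ratios. Two caveats deserve mention. First, the finite-size criterion used in the paper (and in the standard literature) controls the \emph{easy}-direction crossing of an \(n\times 2n\) box, not the square crossing: a small square-crossing probability does not feed into the union-bound renormalization, so your step ``\(f_1(n)\ge\epsilon_0\) before invoking Kesten'' is not justified as stated. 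The fix is simply to feed the bound \(\P_{p_c}[\C_h(n,2n)]\ge \frac{1}{25}\) (and, by \(90^\circ\) rotation, the corresponding bound for \(\C_v(4n,2n)\)) directly into Kesten's theorem; this is precisely why the paper must relax the aspect condition to allow \(l_3=4l_1\). Second, your ``white side'' contradiction silently uses Kesten's identity \(p_c(\Z^2)+p_c(\Z^{2,*})=1\): without it, the vacant \(*\)-process at density \(1-p_c(\Z^2)\) could a priori be subcritical, in which case exponential decay of its connectivities would be no contradiction at all. The paper invokes this identity explicitly, and you should too.
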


%


The remainder of the paper is organised as follows, in section 2, we give a sketch of the strategy and some basic set-ups. Section 3 contains a proof of a particular case of the main theorem for fixed \(\lambda(=2)\). In section 4 we give a self-contained proof of Kesten's result (Theorem \ref{kesten2})  which can be found in \cite{kesten1982percolation}. In the last section we discuss some consequences of the RSW Theorem.

\noindent

\section{The strategy and basic setup.}
When there is no planar self duality, we have to deal with both lower and upper bound of crossing probability. Fortunately, by introducing a weaker type of duality, known under the name of {\it matching pair}, we can deal with both of them at the same time.

\begin{definition}
A {\it mosaic} \(\M\) is a graph imbedded in \(\R^2\) such that 
\begin{enumerate}
\item \(\M\) is planar and contains no loops.
\item All edges of \(\M\) have finite length. Every compact set of \(\R^d\) intersects only finitely many edges of \(\M\).
\item Each component of \(\R^2\setminus \M\) is bounded by a polygon of finite number of edges.
\end{enumerate}

Let \(F\) be a face of mosaic \(\M\). {\it Close-packing} \(F\) means adding an edge to \(\M\) between any pair of vertices on the perimeter on \(F\) which are not yet adjacent.
\end{definition}

\begin{definition}
  Let \(\M\) be a mosaic and \(\mathcal{F}\) a subset of its faces. The {\it matching pair} \( (\G,\G^*)\) of graphs based on \( (\M,\mathcal{F})\) is the following pair of graphs: \(\G\) is the graph obtained from \(\M\) by close-packing all faces in \(\mathcal{F}\). \(\G^*\) is the graph obtained from \(\M\) by close-packing all faces not in \(\mathcal{F}\).
\end{definition}

\begin{remark}
In the sequel we only use the following matching pair \( (\Z^2, \Z^{2,*}) \) based on  \((\Z^2,\emptyset)\), see Figure \ref{Figure 1}.

Note that in a (general) matching pair usually at least one of the graphs is not planar, here \(\Z^{2,*}\) is not planar.
\end{remark}

\begin{figure}[!h]
\centering
\includegraphics[width=.6\textwidth]{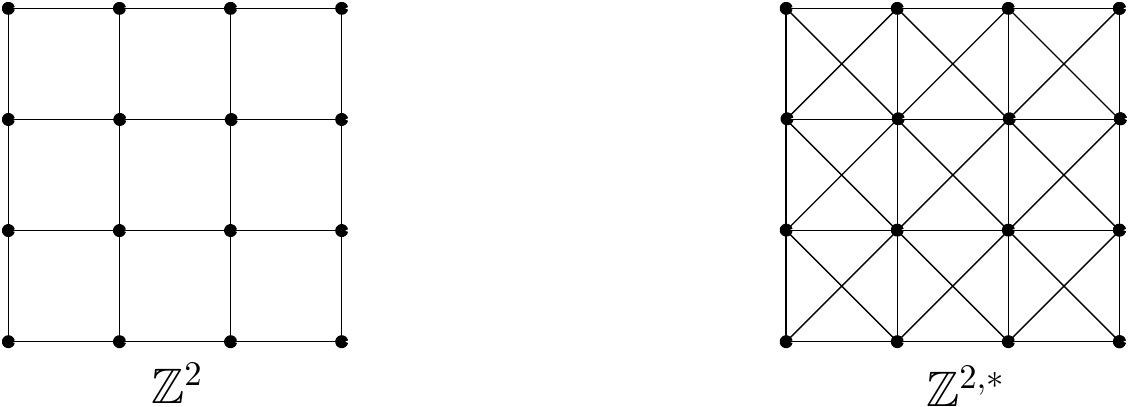}
\caption{{\it the matching pair \(\Z^2, \Z^{2,*}\).}}
\label{Figure 1}
\end{figure}

\begin{remark}
\label{rmk2.4}
The interest of introducing matching pair is the following: 

Consider a site percolation configuration in a rectangle \(R\) of \(\G\), define the dual configuration to be the set of those sites which are vacant and those edges with their endpoints both vacant. It follows that either there exists a horizontal occupied crossing of \(R\) only using edges of \(\G\), either there exists a vertical vacant crossing of \(R\) only using edges of \(\G^*\). See Figure \ref{Figure 1.2} for the case of the matching pair \( (\Z^2,\Z^{2,*})\).
\end{remark}

\begin{figure}[!h]
\centering
\includegraphics[width=.8\textwidth]{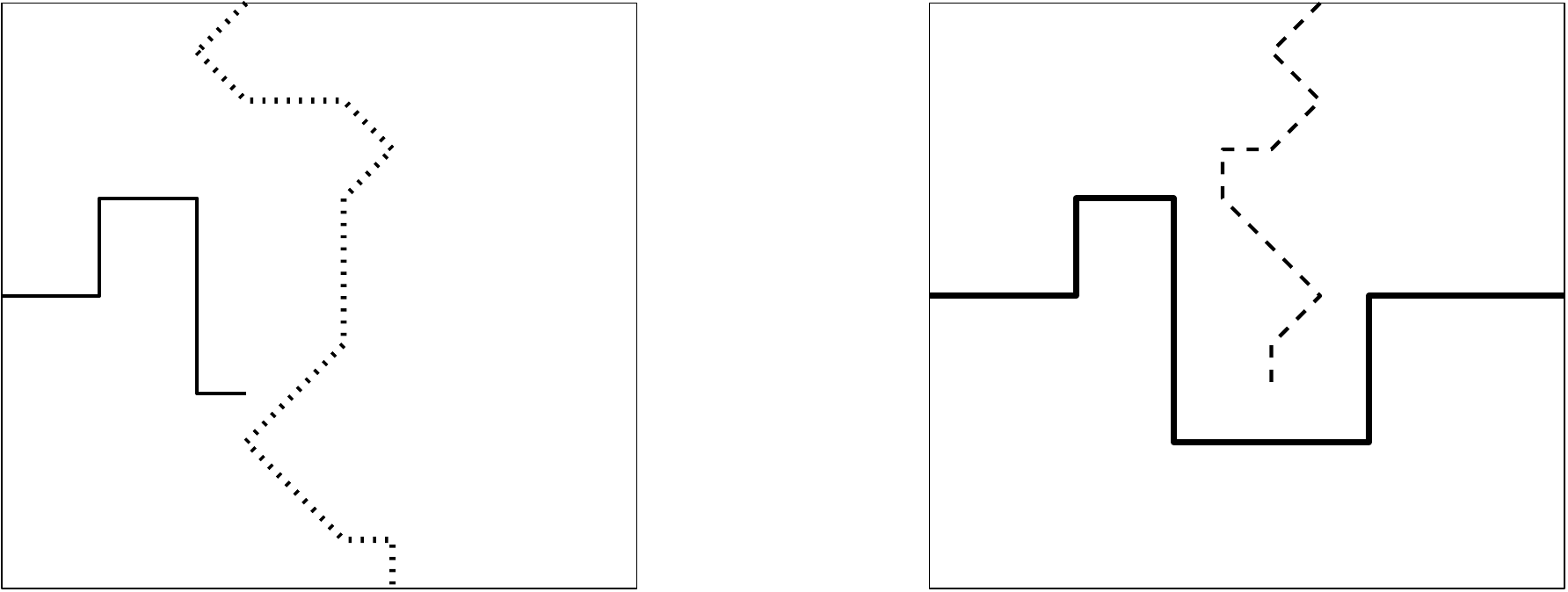}
\caption{{\it In the left box, a vertical crossing on \(\Z^{2,*}\) prohibits any horizontal crossing on \(\Z^2\), and vice versa in the right box.}}
\label{Figure 1.2}
\end{figure}

By remark \ref{rmk2.4} , it is equivalent to give an upper bound for crossing probability on \(\Z^2\) for \(p=p_c(\Z^2)\) or a lower bound for crossing probability but on \(\Z^{2,*}\) for \(p=p_c(\Z^{2,*})\).

\begin{remark}
The critical values of \(p\) on \(\Z^2\) and \(\Z^{2,*}\) satisfy \(p_c(\Z^2)+p_c(\Z^{2,*})=1\), this was shown in \cite{kesten1982percolation} chap 3.
\end{remark}

Here are some heuristics of the main proof:

As shown above, it is enough to give lower bounds on each graph of the matching pair \( \{ \Z^2, \Z^{2,*}\}\).
By introducing some finite size criteria, we show that the probability of the existence of a horizontal crossing in \([0,n]\times[0,2n]\) is larger than \(\frac{1}{25}\), both on \(\Z^2\) and on \(\Z^{2,*}\) at their respective critical points. Secondly, we state and prove a theorem of Kesten~(Theorem \ref{kesten2}) : one can extend existing crossing in such a way that the probability remains bounded from below on each graph of the matching pair.

Although the result of Kesten should be proved on each graph of the matching pair, it appears that one can deal with them at the same time, in the sequel, pictures are made on \(\Z^{2,*}\) in order to be general (as \(\Z^2\) is planar, it's clearer). In every proof we will point out what kinds of graph properties are needed, and show that both of the matching pair satisfy such properties.

\section{Finite criteria and consequences}
We will prove Theorem ~\ref{lem0} which will serve as a base of the RSW Theorem, this result can be compared to: {\it the probability that there exits a horizontal crossing of any \(n\) by \(n\) square is greater than \(\frac{1}{2}\)}, in the classic proof of the RSW Theorem for bond \(\Z^2\) percolation.

The statement of Theorem~\ref{lem0} is valid regardless whether we are on \(\Z^2\) or \(\Z^{2,*}\). Let \(\G\) denote either one of these graphs.

\begin{theorem}
\label{lem0}
  For site percolation on \(\G\), for all \(n\) large enough,
  \[\P_{p_c}[\text{ there exists a horizontal crossing of the box }[0,n]\times[0,2n]\text{ }]\geq \frac{1}{25},\] the critical value \(p_c\) is defined by \(p_c=\sup \{ p; \hspace{.2cm} \theta_{\G}(p)=0\}\), where \[\theta_{\G}(p)=\P_p[\text{the origin is contained in an infinite cluster}].\]
\end{theorem}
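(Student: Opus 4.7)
The plan is to combine the matching-pair duality (Remark \ref{rmk2.4}) with a finite-size criterion. For any rectangle $R$ and any $p$, the matching pair property gives
\begin{equation*}
\P_p[\text{H-crossing of } R \text{ on } \G] + \P_{1-p}[\text{V-crossing of } R \text{ on } \G^*] \geq 1.
\end{equation*}
Specialising to $R = [0,n] \times [0,2n]$ and $p = p_c(\G)$, and using $p_c(\G)+p_c(\G^*)=1$ together with the $\pi/2$-rotational symmetry of $\G^*$ (which replaces a vertical crossing of the tall $n\times 2n$ rectangle by a horizontal crossing of the long $2n\times n$ rectangle), the theorem reduces to proving, on each graph $\G$ of the matching pair, the upper bound
\begin{equation*}
\P_{p_c(\G)}\bigl[\text{H-crossing of } [0,2n]\times[0,n] \text{ on }\G\bigr] \;\leq\; 24/25 \qquad \text{for all } n \text{ large enough.}
\end{equation*}

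To prove this upper bound I would argue by contradiction via a \emph{finite-size criterion}. Suppose, to the contrary, that at some scale $n_0$ the long-direction crossing probability of the $2n_0\times n_0$ rectangle on $\G$ at $p_c(\G)$ exceeds $24/25$. By continuity of the crossing probability in $p$ combined with the definition $p_c(\G)=\sup\{p:\theta_\G(p)=0\}$, one may find $p'<p_c(\G)$ at which the crossing probability still strictly exceeds $24/25$ but where $\theta_\G(p')=0$. I will derive a contradiction by showing $\theta_\G(p')>0$. Using the $\pi/2$-rotational symmetry of $\G$, each of the four axis-aligned $2n_0\times n_0$ (resp.\ $n_0\times 2n_0$) rectangles admits a long-direction crossing with probability $>24/25$ at $p'$. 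Arranging four such rectangles in a picture-frame pattern around the origin and applying Harris--FKG to the four monotone crossing events, the probability that all four crossings occur is at least $(24/25)^4$; whenever they all occur, each pair of overlapping frame pieces forces the corresponding crossings to share a vertex (by planarity of $\Z^2$ when $\G=\Z^2$, and by the matching pair property when $\G=\Z^{2,*}$), producing an occupied circuit around the origin. A renormalisation step, in which a coarse site is called ``good'' whenever these four crossings occur in a block around it, makes the good sites form a finite-range dependent site percolation with marginal probability at least $(24/25)^4$, which by stochastic domination (Liggett--Schonmann--Stacey) dominates a supercritical Bernoulli site percolation on the coarse lattice and hence yields an infinite occupied cluster. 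This contradicts $\theta_\G(p')=0$.

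\textbf{Main obstacle.} The delicate point is the renormalisation: a single-scale Harris--FKG bound does \emph{not} suffice, because $\prod_k(24/25)^4=0$, so one cannot directly intersect circuit events over all scales. One must instead appeal to stochastic domination of dependent by independent percolation, and this is where the specific threshold $24/25$ is pinned down, by requiring $(24/25)^4$ to exceed the bound supplied by Liggett--Schonmann--Stacey for the chosen block lattice. A secondary subtlety concerns $\G^*=\Z^{2,*}$, which is not planar: the topological step ``two crossings meeting geometrically must share a vertex'' cannot be justified by planarity of $\Z^{2,*}$ alone, but is precisely what the matching pair structure delivers --- so throughout the argument, planar-topological steps on $\Z^2$ must be replaced by their matching-pair analogues on $\Z^{2,*}$.
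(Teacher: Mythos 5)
Your reduction via the matching pair and $p_c(\G)+p_c(\G^*)=1$ is sound, and your overall strategy (perturb \emph{below} $p_c$, then show that a long-direction crossing probability above $24/25$ at a single scale forces percolation via block renormalisation and Liggett--Schonmann--Stacey) is a genuinely different route from the paper's. But it has a genuine gap, and it is exactly the point you flag as the "main obstacle" without resolving it: the constant. Your good-block event has probability at least $(24/25)^4\approx 0.849$ (or $23/25=0.92$ with a union bound over two crossings), and the resulting coarse field is $k$-dependent for some $k\geq 1$. The LSS domination bounds degrade very fast in $1-p$: even the sharpest known results of this type for $1$-independent percolation on $\Z^2$ (e.g.\ the Balister--Bollob\'as--Walters threshold $0.8639$, and that is for \emph{bond} percolation, where the critical point is lower than for site percolation) already exceed $(24/25)^4$, and the general LSS formula applied with $1-p=0.08$ and the relevant degree gives a dominated density nowhere near $p_c^{site}(\Z^2)\approx 0.593$. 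So with the specific threshold $1/25$ that the theorem demands, your argument does not close; it would prove the statement only with $1/25$ replaced by some much smaller (and unspecified) constant. A secondary, fixable issue: circuits around the origin in disjoint annuli are the right event for proving \emph{absence} of an infinite dual cluster, not for building an infinite primal cluster; for the renormalisation you need overlapping horizontal and vertical crossings whose intersections chain together along an infinite coarse path.

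The paper avoids this numerology entirely by perturbing \emph{above} $p_c$ instead. Assuming $\P_{p_c}[\C_h(n,2n)]<1/25$, the five-rectangle decomposition gives $25\,\P_p[\C_h(2n,4n)]\leq(25\,\P_p[\C_h(n,2n)])^2$, whose iteration yields exponential decay of connectivity at $p_c$ (this doubling inequality is where $1/25$ is pinned down, not any domination theorem). Exponential decay forces the Aizenman--Newman finite-size quantity $N_{\square_{n_1}}$ below $1$ for some fixed $n_1$; since $p\mapsto N_{p,\square_{n_1}}$ is a polynomial, it stays below $1$ at some $p_c+\delta$, where Proposition \ref{prop2} again gives exponential decay --- contradicting $\theta(p_c+\delta)>0$. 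The key idea you are missing is this use of an \emph{open}, finitely checkable condition ($N_{\square_{n_1}}<1$) to transport subcritical behaviour across $p_c$, which requires no quantitative renormalisation at all. If you want to salvage your direction of perturbation, you would likewise need a finite-size criterion that is stable in $p$ and implies percolation, with constants you actually verify.
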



\subsection{First criterion}
Firstly, we consider a simpler version of finite volume
criterion of Aizenman and Newman(1984)~\cite{aizenman1984tree}.

\begin{proposition}
\label{prop2}
  Let \(p\in [0,1]\), if \(N_{p,\square_n}<1\), then there exists \(c>0\), such that for all \(a,b\), \(d(a,b)=m> n\), \(\P_p[a\leftrightarrow b]\leq e^{-cm}\).
\end{proposition}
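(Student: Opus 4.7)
The plan is to mimic the classical Aizenman–Newman renewal argument to derive exponential decay of the two-point function from the finite-volume hypothesis $N_{p,\square_n} < 1$. Since $d(a,b) = m > n$, any open cluster realizing $\{a \leftrightarrow b\}$ must reach the boundary of the translated box $\square_n(a) := a + \square_n$, so a union bound over possible exit vertices yields
\[
\P_p[a \leftrightarrow b] \leq \sum_{x \in \partial \square_n(a)} \P_p\bigl[a \leftrightarrow x \text{ in } \square_n(a),\ x \leftrightarrow b\bigr].
\]

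The main obstacle is decoupling the two events in each summand. A direct BK inequality is unsatisfactory for site percolation because the vertex $x$ itself is shared by both events. I would resolve this by a Markovian exploration: reveal the cluster $C$ of $a$ intersected with $\square_n(a)$ together with its closed outer boundary; conditionally on this (determined) explored set, the remaining sites outside $\square_n(a)$ are still independent Bernoulli$(p)$. On the event $\{a \leftrightarrow x \text{ in } \square_n(a)\}$ the vertex $x$ lies in $C$, and the conditional probability that $x$ reaches $b$ through unexplored sites is dominated (via monotonicity / FKG) by the unconditional two-point function $\P_p[x \leftrightarrow b]$. Combining this with translation invariance yields
\[
\P_p[a \leftrightarrow b] \leq \sum_{x \in \partial \square_n(a)} \P_p\bigl[a \leftrightarrow x \text{ in } \square_n(a)\bigr] \cdot \P_p[x \leftrightarrow b] \leq N_{p,\square_n} \cdot \sup_{y:\, d(y,b) \geq m-n} \P_p[y \leftrightarrow b],
\]
where in the last step I used that every $x \in \partial \square_n(a)$ satisfies $d(x,b) \geq m - n$.

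Finally I would set $\tau(k) := \sup_{d(u,v) \geq k} \P_p[u \leftrightarrow v]$; the inequality above reads $\tau(m) \leq N_{p,\square_n} \cdot \tau(m-n)$, and iterating $\lfloor m/n \rfloor$ times gives
\[
\tau(m) \leq N_{p,\square_n}^{\lfloor m/n\rfloor} \leq e^{-cm}, \qquad c = \tfrac{1}{n}\log\bigl(1/N_{p,\square_n}\bigr) > 0,
\]
which is the desired exponential decay. I expect no difficulty with the iteration or the final constant; the entire delicate point is the exploration step, where one must specify carefully what information is revealed on the outer boundary of $C$ and invoke FKG/monotonicity to ensure the conditional connection probability is genuinely dominated by the unconditional one.
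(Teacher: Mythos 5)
Your overall scheme --- localise to the box $\square_n(a)$, extract a factor $N_{p,\square_n}$, and iterate of order $\lfloor m/n\rfloor$ times --- is the same as the paper's, and your bookkeeping with $\tau(k)=\sup_{d(u,v)\ge k}\P_p[u\leftrightarrow v]$ and $\tau(m)\le N_{p,\square_n}\,\tau(m-n)$ is a clean way to run the iteration. The problem is the decoupling step, which you rightly identify as the crux but do not actually resolve. After revealing the cluster $C$ of $a$ in $\square_n(a)$ together with its closed outer boundary, the conditioning is not monotone (sites of $C$ are forced open, sites of its outer boundary inside the box are forced closed), so FKG gives nothing in either direction. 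Worse, the specific domination you assert is not correct: on $\{C=C_0\}$ with $x\in C_0$, every site of $C_0$ is open and connected to $x$, so $\{x\leftrightarrow b\}$ coincides with $\{C_0\leftrightarrow b\}$, and its conditional probability is that of $b$ reaching $\partial_{\mathrm{out}}C_0\setminus\square_n(a)$ through the unexplored region --- a quantity that can exceed $\P_p[x\leftrightarrow b]$, since the whole open cluster $C_0$, not just $x$, is available to help reach $b$. So the Simon--Lieb type inequality $\P_p[a\leftrightarrow b]\le\sum_x\P_p[a\leftrightarrow x\text{ in }\square_n(a)]\,\P_p[x\leftrightarrow b]$ (which is true) is not established by your argument.

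The paper sidesteps the issue entirely, and you should too: instead of splitting at a single shared vertex $x$, split any open path from $a$ to $b$ into its initial segment, an open path from $a$ to the \emph{inner} boundary $\partial_-\square_n(a)$ using only sites of $\square_n(a)$, and its final segment, an open path from the \emph{outer} boundary $\partial_+\square_n(a)$ to $b$ using only sites outside $\square_n(a)$. These two events depend on disjoint sets of sites, hence are independent, with no BK, exploration, or FKG needed; bounding the second event by $\sum_{y\in\partial_+\square_n(a)}\P_p[y\leftrightarrow b]$ produces exactly the factor $N_{p,\square_n}=\vert\partial_+\square_n\vert\,\tau_{p,\square_n}(0,\partial_-\square_n)$ from the hypothesis, and your iteration then goes through essentially verbatim (with $m-n$ adjusted by an additive constant to account for the width of the boundary layer). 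Alternatively, your exploration can be repaired by bounding the conditional connection probability by $\sum_{y\in\partial_{\mathrm{out}}C_0\setminus\square_n(a)}\P_p[y\leftrightarrow b]$ rather than by $\P_p[x\leftrightarrow b]$; summing over $C_0$ then recovers the same bound.
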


Let us give some definitions and explain the meaning of \(N_{p,\square_n}\).

\begin{definition}
  Two sites \(a\) and \(b\) are connected, denoted \(a\leftrightarrow
  b\), if there exists an occupied path from \(a\) to \(b\). If \(A\) is a subset of \(\G\), then \(a\) is connected to some points of \(A\) is denoted by \(a\leftrightarrow A\). Define
  the connectivity function of \(a\) and \(b\), denoted \(\tau_p(a,b)\), to be \(\P_p[a\leftrightarrow b]\).
\end{definition}

Let \(\square_n\) be the square \([-n,n]\times[-n,n]\). We can define
the inner and outer boundary of \(\square_n\) (respectively denoted  \(\partial_-\square_n, \partial_+\square_n\))  by
\begin{itemize}
\item \(\partial_-\square_n=\{ a\in \square_n: \text{ }\exists b\notin
  \square_n\text{, }a\sim b\}, \)
\item \(\partial_+\square_n=\{ b\notin \square_n: \text{ } \exists a\in
  \square_n\text{, }b\sim a\}, \)
\end{itemize} 
where \(a\sim b\) means that \(a\) is adjacent to \(b\) in \(\G\).

Analogously, let \(\square_n(a)=\square_n+a\), and \(\tau_p(a,A)=\P_p[\exists a'\in A, a\leftrightarrow a']\) when \(A\) is subset of
\(\G\), let \(\tau_{p,A}(a\leftrightarrow b)=\P_{p,A}(a\leftrightarrow b)\) where \(\P_{p,A}\) denotes the probability measure restricted in \(A\).

Let \(N_{\square_n}\) be the value \(\vert \partial_+\square_n\vert 
\tau_{p,\square_n}(0,\partial_-\square_n)\). This value is an upper bound of the expected number of sites on the inner boundary of \(\square_n\), which are connected to
the origin by an occupied path inside \(\square_n\).

\begin{figure}[!h]
\centering
\includegraphics[width=.3\textwidth]{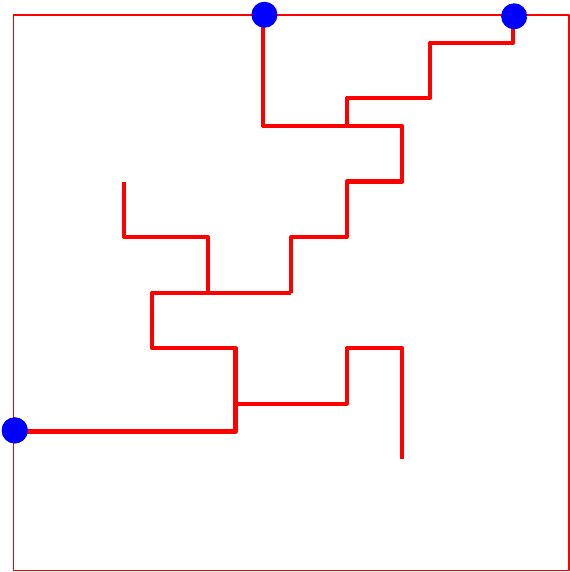}
\caption{{\it The number of sites on the boundary connected to the origin inside the square is 3.}}
\label{Figure 2}
\end{figure}

\begin{proof}

Let \(a\) be a site of \(\G\) far from the origin, more precisely \(d(0,a)=m>n\), we will show that
\(\tau_p(0,a)\leq e^{-cm}\) for some constant \(c\).

Recall that when \(0\leftrightarrow a\) occurs, there exists a path from \(0\) to
\(\partial_-\square_n\) and another disjoint path from
\(\partial_+\square_n\) to \(a\) as shown in Figure \ref{Figure 3} and \ref{Figure 4}.

\begin{figure}[!h]
\centering
\includegraphics[width=.65\textwidth]{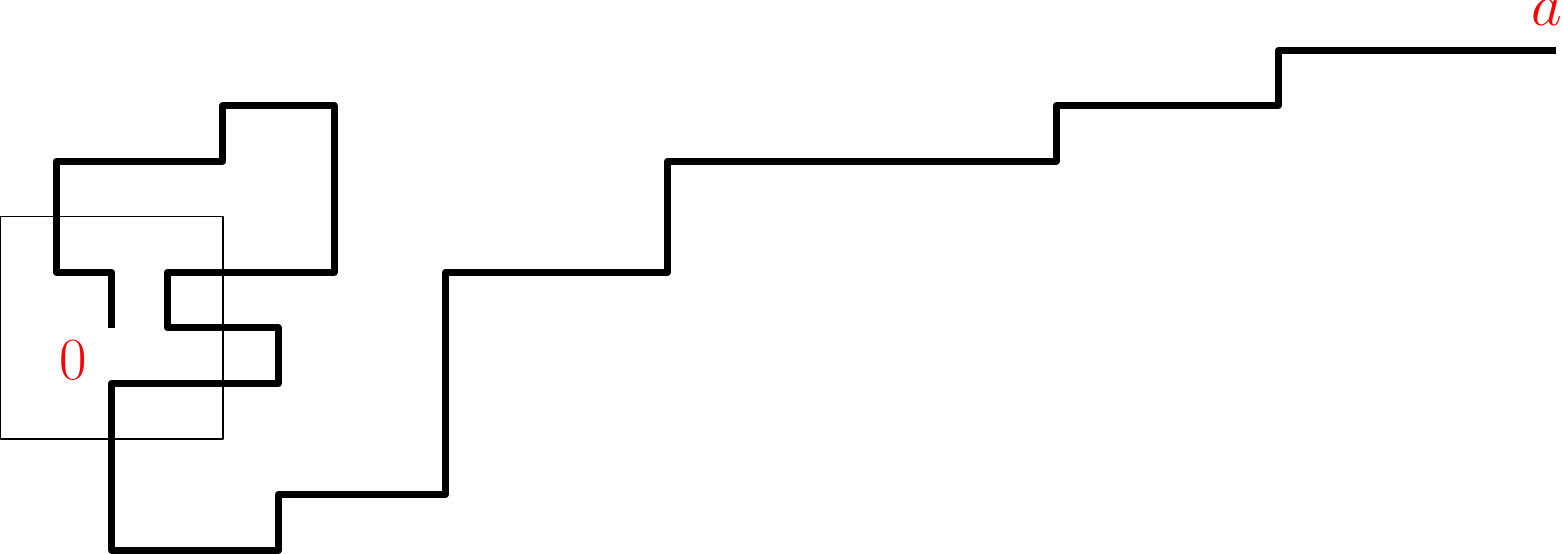}
\caption{{\it an occupied path from \(0\) to \(a\).}}
\label{Figure 3}
\end{figure}

\begin{figure}[!h]
\centering
\includegraphics[width=.65\textwidth]{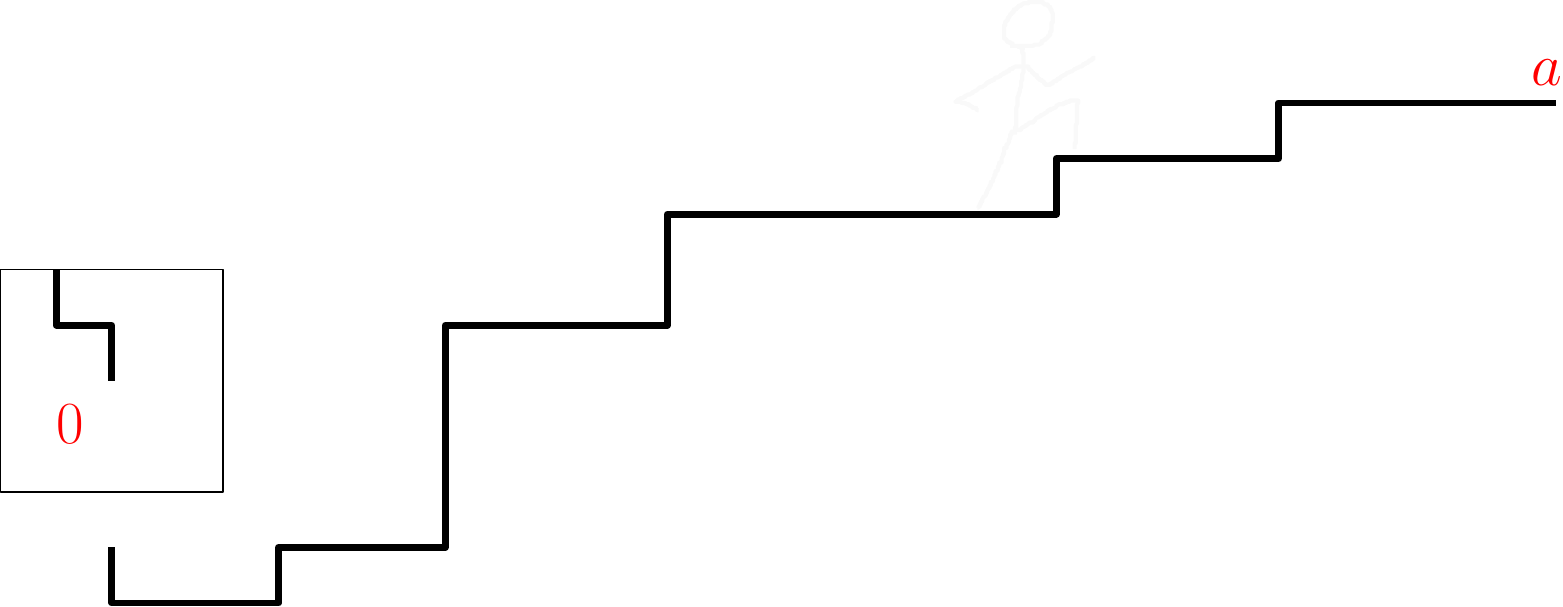}
\caption{{\it two occupied disjoint path \(0\leftrightarrow \partial_-\square_n\text{ inside
}\square_n\text{ ,}\partial_+\square_n\leftrightarrow a\text{ outside }\square_n\).}}
\label{Figure 4}
\end{figure}

Hence
\begin{align*}
  \tau_p(0,a)&=\P_p(0\leftrightarrow a)\\
&\leq \P_p(0\leftrightarrow \partial_-\square_n\text{ inside
}\square_n\text{ ,}\partial_+\square_n\leftrightarrow a \text{ outside }\square_n).
\end{align*}

As the two paths mentioned above belong to disjoint subsets of
\(\G\), it follows by independence that the last term is just a product.

Therefore 
\begin{align*}
\tau_p(0,a)&\leq\P_p[0\leftrightarrow \partial_-\square_n\text{ inside
}\square_n]\P_p[\partial_+\square_n\leftrightarrow a\text{ outside }\square_n]\\
&\leq\tau_{p,\square_n}(0,\partial_-\square_n)\tau_{p,\square_n^c}(\partial_+\square_n,a)\\
&\leq
\tau_{p,\square_n}(0,\partial_-\square_n)\tau_p(\partial_+\square_n,a)\\
&\leq
\tau_{p,\square_n}(0,\partial_-\square_n)\sum_{b\in \partial_+\square_n}\tau_p(b,a).
\end{align*}

Thus by introducing \(N_{\square_n}\), the above inequality can be written into \[\tau_p(0,a)\leq
\frac{N_{\square_n}}{\vert \partial_+\square_n\vert}\sum_{b\in \partial_+\square_n}\tau_p(b,a).\]

One can iterate the last inequality \(O(\lfloor \frac{m}{n}\rfloor\)) times,
therefore, \(\tau_p(0,a)\leq N_{\square_n}^{O(\lfloor m/n\rfloor)}\leq
e^{-mc}\).
\end{proof}

\begin{remark}
\label{rmk1}
  The reciprocal of Proposition ~\ref{prop2} is also true, i.e. if we have exponential decay of the
  connectivity function, then there exists \(n\) such that
  \(N_{\square_n}<1\). Indeed,
  \(\vert \partial_+\square_n\vert \asymp n\), 
  \(\tau_{p,\square_n}(0,\partial_-\square_n)\leq \tau_p(0,a)\) when
  \(d(0,a)=n\), and taking \(n\) large enough yields \(N_{\square_n}<1\).
\end{remark}

\subsection{Second criterion}
Let \(\C_h([n_1,m_1],[n_2,m_2])\)\footnote{For convinience, we also write \(\C_h(l_1,l_2)\) for \(\C_h([0,l_1],[0,l_2])\).} be the event that {\it there exists a horizontal
crossing of the rectangle \([n_1,m_1]\times [n_2,m_2]\)}. That is, an occupied
circuit inside the rectangle which connects the left and the right side of the
rectangle. Respectively let \(\C_v([n_1,m_1],[n_2,m_2])\) be the event that {\it there exists a vertical
crossing of the rectangle \([n_1,m_1]\times [n_2,m_2]\)}.

Now we can state the following finite-size criterion:
\begin{proposition}
 \label{prop1}
  If for some integer \(n\), \(\P_p[\C_h([0,n],[0,2n])]\leq \epsilon<\frac{1}{25}\),
  then we have exponential decay of the connectivity function: there exists \(c>0\), such that
  \(\P_p[a\leftrightarrow b]=\tau_p(a,b)\leq
  e^{-dc}\), where \(d=d(a,b)> n\) is the graph distance of \(a\) and \(b\).
\end{proposition}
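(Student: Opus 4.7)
The plan is to combine the hypothesis with a renormalisation on scales, so as to produce some scale $n^{*}$ at which $N_{p,\square_{n^{*}}}<1$; Proposition~\ref{prop2} then yields the exponential decay of the connectivity function.

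\textbf{Step 1 (one-arm dominated by a crossing).} For any scale $n'$, I would first prove the elementary bound
\[
\tau_{p,\square_{n'}}(0,\partial_-\square_{n'})\ \leq\ 4\,\P_p\bigl[\C_h([0,n'],[0,2n'])\bigr].
\]
The argument is to split the event $\{0\leftrightarrow \partial_-\square_{n'}\text{ in }\square_{n'}\}$ according to which of the four sides of $\square_{n'}=[-n',n']^2$ is reached. By the fourfold symmetry of $\Z^2$ (and of $\Z^{2,*}$) it is enough to treat the right side: any sub-path from $0$ to the right side stays in the half-square $[0,n']\times[-n',n']$, and since the origin lies on the left edge of that half-square the sub-path is itself a horizontal crossing of an $n'\times 2n'$ rectangle.

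\textbf{Step 2 (propagating to larger scales).} The hypothesis gives $\P_p[\C_h([0,n],[0,2n])]\leq\epsilon<1/25$; I would bootstrap it through a doubling recursion of the form
\[
\P_p\bigl[\C_h([0,2n'],[0,4n'])\bigr]\ \leq\ 25\,\P_p\bigl[\C_h([0,n'],[0,2n'])\bigr]^{2}.
\]
The heuristic behind it is that a horizontal crossing of the $2n'\times 4n'$ rectangle contains two disjoint horizontal crossings of the $n'\times 4n'$ half-rectangles, and each tall-and-thin crossing can in turn be caught inside one of a bounded number of $n'\times 2n'$ sub-rectangles covering the full rectangle. A combinatorial application of the van~den~Berg--Kesten inequality (valid for Bernoulli site percolation on either graph of the matching pair) yields the quadratic gain, with the combinatorial factor chosen so that the fixed point of $x\mapsto 25x^{2}$ sits exactly at the threshold $\epsilon=1/25$ appearing in the hypothesis. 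Iterating from scale $n$ produces $\P_p[\C_h([0,2^{k}n],[0,2^{k+1}n])]\leq \tfrac{1}{25}(25\epsilon)^{2^{k}}$, which tends to $0$ doubly exponentially in $k$ once $\epsilon<1/25$.

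\textbf{Step 3 (conclusion).} At scale $n^{*}=2^{k}n$, combining Steps~1 and~2 gives
\[
N_{p,\square_{n^{*}}}\ \leq\ |\partial_+\square_{n^{*}}|\cdot 4\,\P_p\bigl[\C_h([0,n^{*}],[0,2n^{*}])\bigr]\ =\ O\!\bigl(n^{*}\,(25\epsilon)^{2^{k}}\bigr),
\]
which tends to $0$ with $k$ since $25\epsilon<1$ while $n^{*}$ grows only geometrically. For $k$ large enough one therefore has $N_{p,\square_{n^{*}}}<1$, and Proposition~\ref{prop2} delivers $\tau_p(a,b)\leq e^{-c\,d(a,b)}$ for every $d(a,b)>n^{*}$; adjusting the constant $c$ (and using the trivial bound $\tau_p\leq 1$ at intermediate distances) extends the estimate to all $d(a,b)>n$.

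\textbf{Main obstacle.} The technical heart of the proof is the recursion of Step~2. Since self-duality of $\Z^2$ is not available, the classical RSW glueing must be replaced by a purely combinatorial BK-type argument; exhibiting the right covering of the doubled rectangle by $n'\times 2n'$ sub-rectangles so that the combinatorial factor matches the $25$ appearing in the hypothesis, and checking that the argument runs identically on $\Z^2$ and on $\Z^{2,*}$, is the delicate point.
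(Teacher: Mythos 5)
Your proposal is correct and follows essentially the same route as the paper: the identical five-rectangle decomposition yielding $\P_p[\C_h(2n',4n')]\le 25\,\P_p[\C_h(n',2n')]^2$ (the paper invokes plain independence of the crossings in the two disjoint half-rectangles rather than the BK inequality, but this is immaterial), iterated to give the doubly exponential bound $(25\epsilon)^{2^k}$ at dyadic scales, together with the four-rectangle bound comparing the one-arm event to short-direction crossings. The only structural difference is at the very end: where you detour through $N_{\square_{n^*}}<1$ and Proposition~\ref{prop2}, the paper concludes the exponential decay of $\P_p[A_{2m}]$ directly from the dyadic crossing estimates; both endings work (and in both cases the extension from dyadic scales, resp.\ from $d>n^*$, down to all $d>n$ needs the same routine adjustment of the constant $c$).
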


\begin{figure}[!h]
\centering
\label{Figure 5}
\includegraphics[width=.3\textwidth]{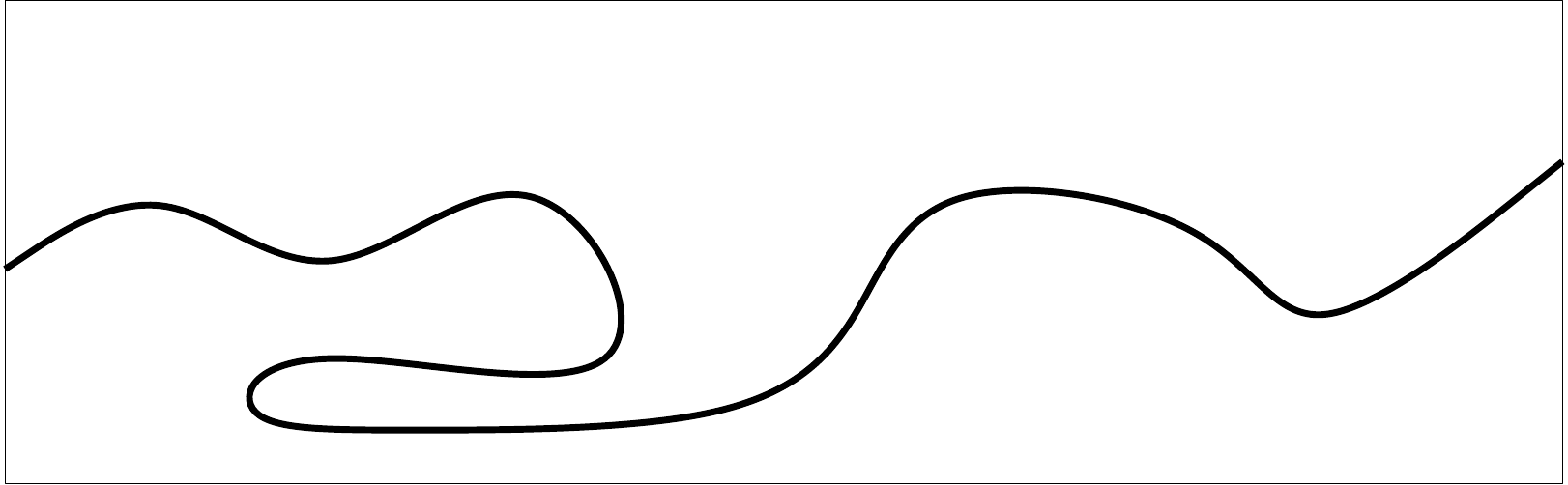}
\caption{{\it an occupied horizontal crossing.}}
\end{figure}

\begin{proof}
The proof is classic, see \cite{duminil2011near} Lemma 2.13 for a
more general case.

First we consider the rectangle \([0,n]\times[0,4n]\), and let
\(R_i,\text{ }i=1,...,5\) denote the following five rectangles: 
\begin{itemize}
\item \(R_1=[0,n]\times[0,2n]\)
\item \(R_2=[0,n]\times[n,3n]\)
\item \(R_3=[0,n]\times[2n,4n]\)
\item \(R_4=[0,2n]\times[n,2n]\)
\item \(R_5=[0,2n]\times[2n,3n]\)
\end{itemize}

As in Figure \ref{Figure 6}.

\begin{figure}[!h]
\centering
 \includegraphics[width=.2\textwidth]{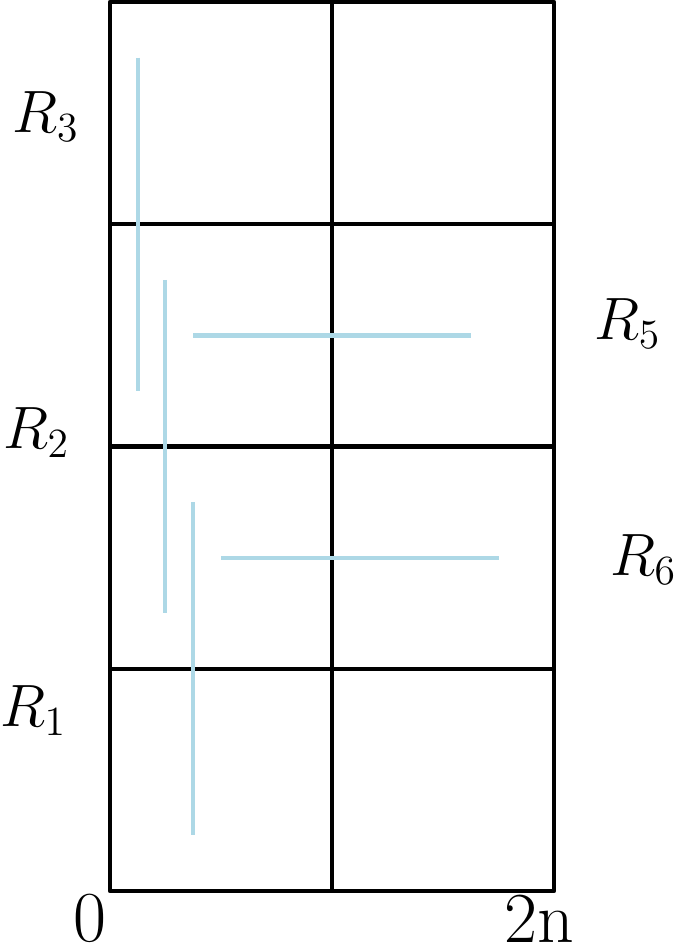} \hspace{1cm}
 \includegraphics[width=.2\textwidth]{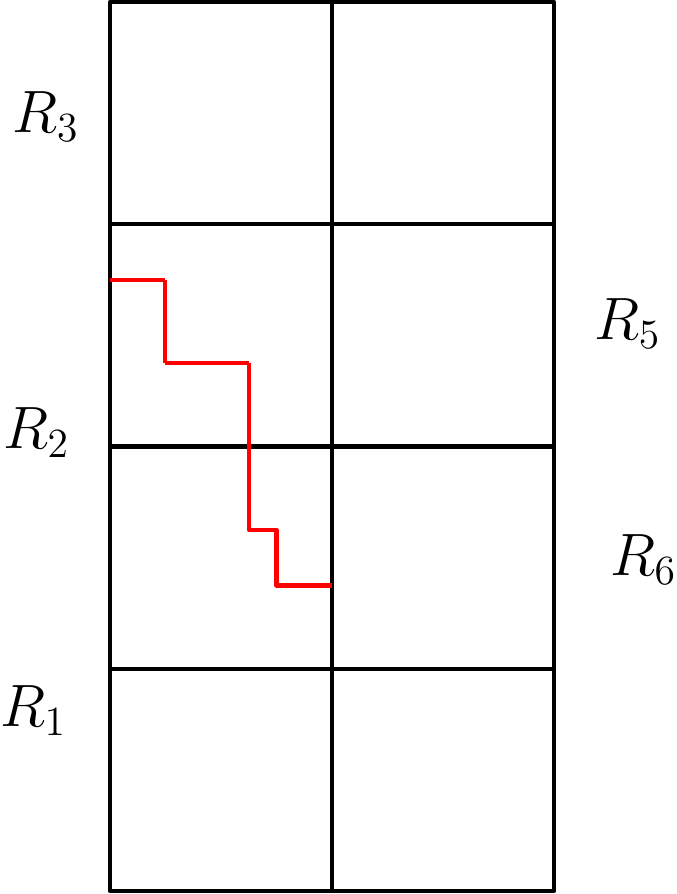}\hspace{1cm}
 \includegraphics[width=.2\textwidth]{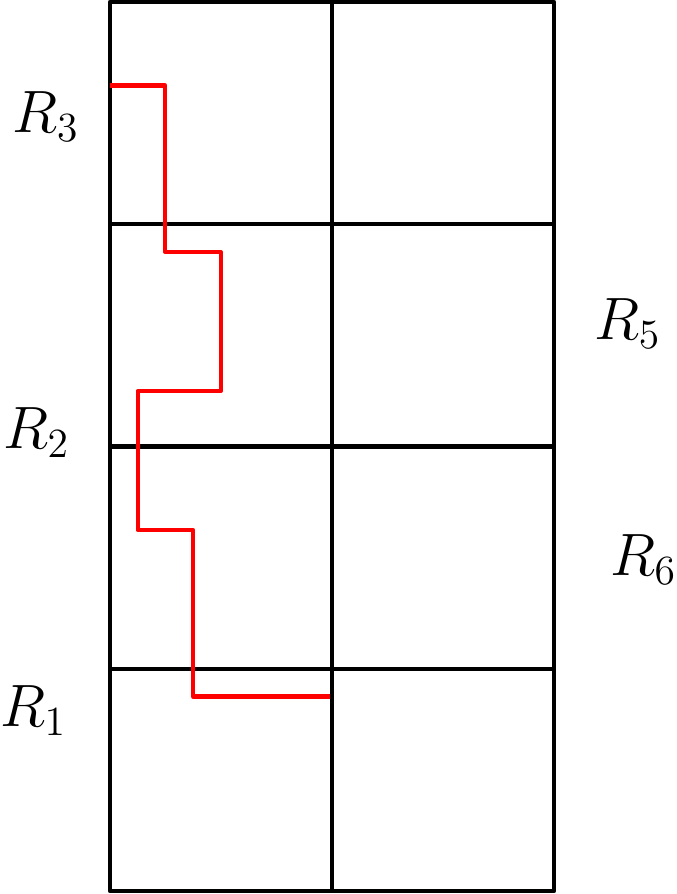}
\caption{{\it the first picture shows the five small rectangles, the remaining pictures give two examples in which the large one is crossed horizontally and thus at least one of the five small rectangle is crossed in its short direction.}}
\label{Figure 6}
\end{figure}

If \(\C_h([0,n],[0,4n])\) occurs, then at least one of the five
rectangles is crossed in its short direction, thus
\[\P_p[\C_h([0,n],[0,4n])]\leq \P_p[\cup_{i=1}^5 \{ R_i \text{ is crossed in its
  short direction}\}] \leq 5\P_p[\C_h([0,n],[0,2n])].\] 

Now as \(\C_h([0,2n],[0,4n])\) implies that both rectangles
\([0,n]\times[0,4n]\) and \([n,2n]\times[0,4n]\) are crossed in the
short direction, hence \(\P_p[\C_h([0,2n],[0,4n])]\leq \P_p[\C_h([0,n],[0,2n])]^2\) by
independence of the two crossings in disjoint rectangles.

By recurrence, \(25\P_p[\C_h([0,2^kn],[0,2^{k+1}n])]\leq
(25\P_p[\C_h([0,n],[0,2n])])^{2^k}\). Recall that \(\epsilon<\frac{1}{25}\), next  
\begin{equation}
  \label{eq:1}
  25\P_p[\C_h([0,2^kn],[0,2^{k+1}n])]\leq (25\epsilon)^{2^k} = e^{2^k \log
    (25\epsilon)} =e^{-c2^kn}
\end{equation}

with \(c=-\log(25\epsilon)/n>0\).

\begin{figure}[!h]
\centering
\includegraphics[width=.3\textwidth]{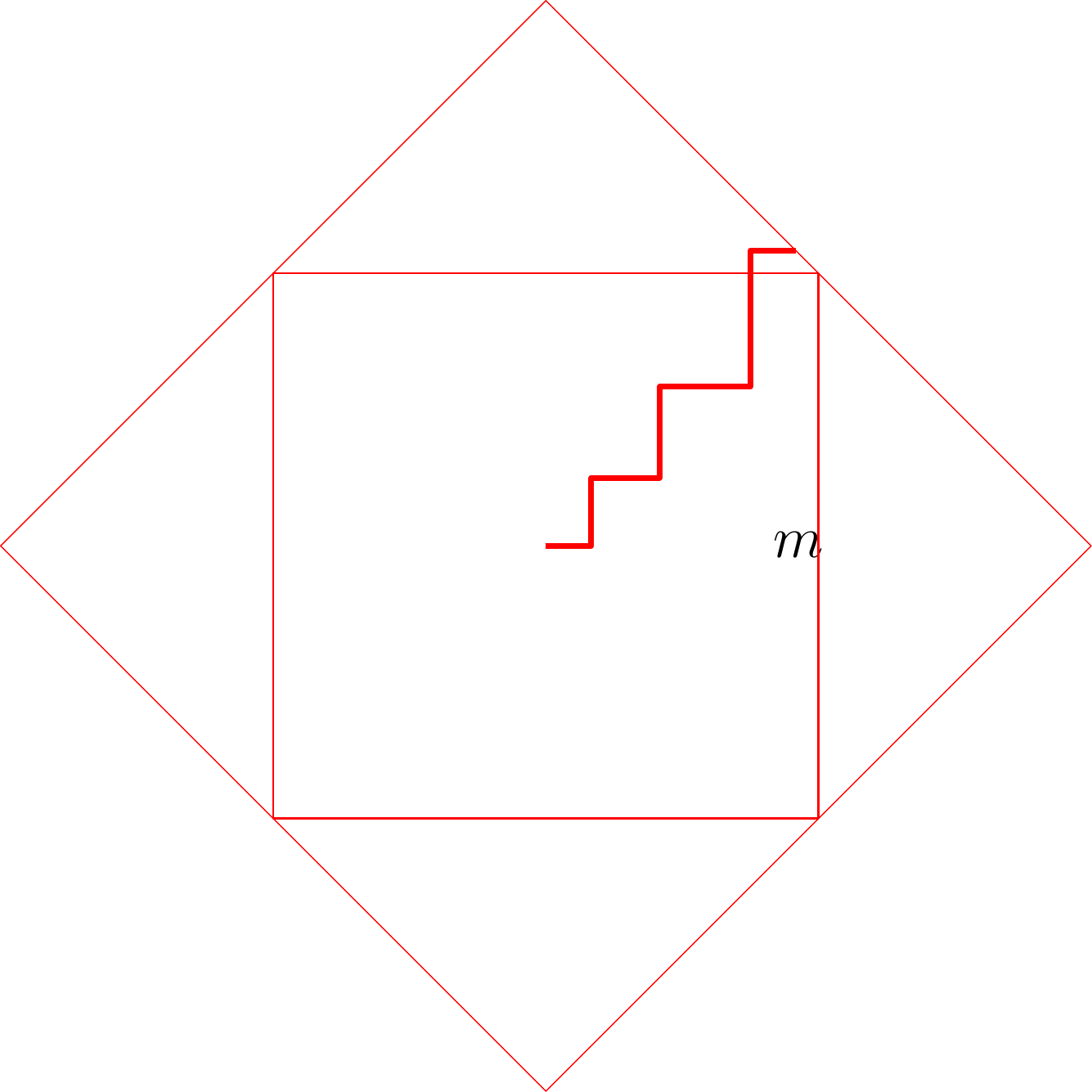}\hspace{4cm}
\includegraphics[width=.3\textwidth]{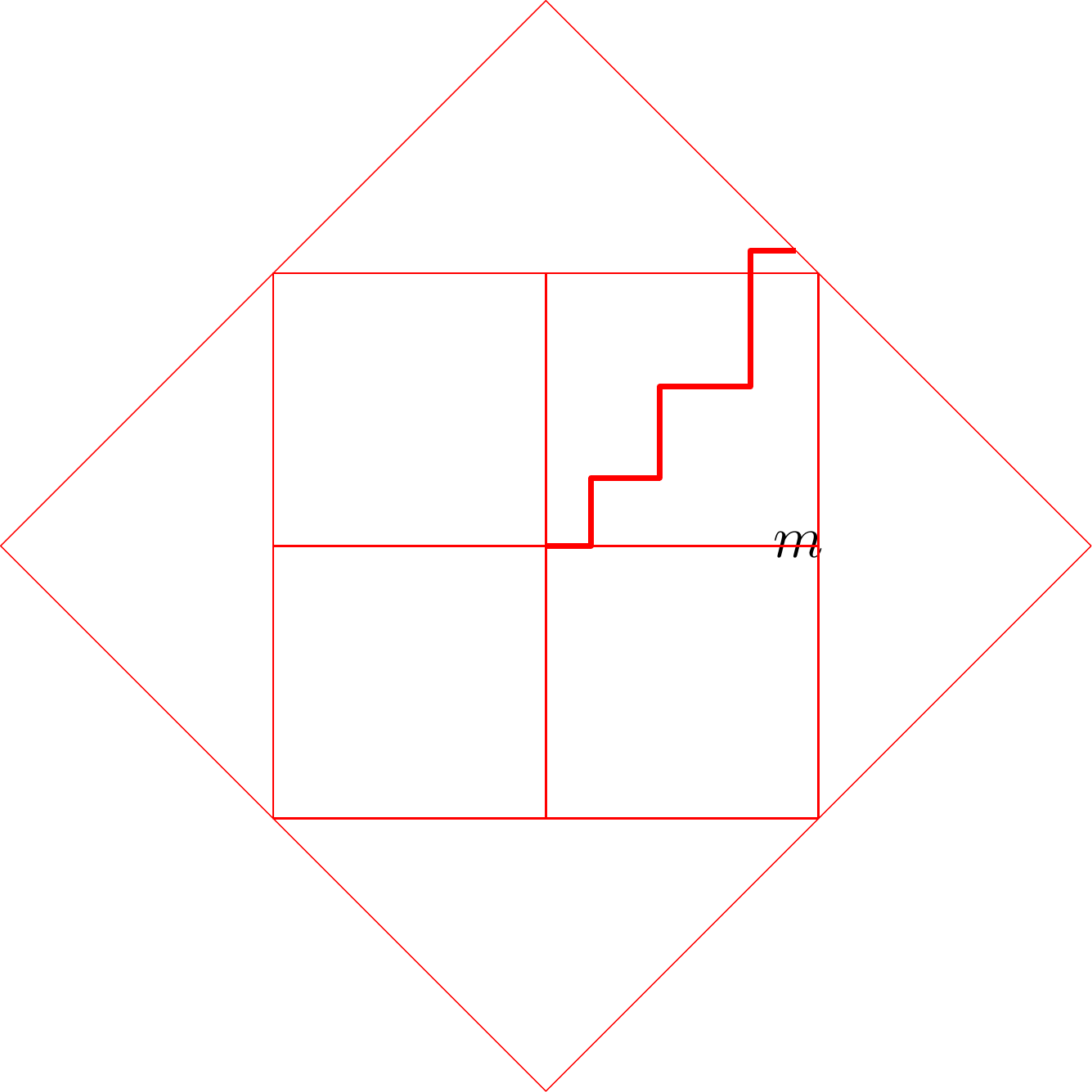}
\caption{\it the fact that the origin \(0\) is connected to some point at distance \(2m\) implies a crossing of one of four rectangles.}
\label{Figure 7}
\end{figure}

Let  \(A_m\) denote the event that {\it the origin is connected to
some point of graph distance \(m\) to the origin}, If \(A_{2m}\) occurs, then at least one of the following four rectangles: 
\([0,m]\times [-m,m]\), \([-m,0]\times[-m,m]\), \([-m,m]\times[0,m]\),
\([-m,m]\times[-m,0]\) is crossed
in the short direction, as shown in Figure \ref{Figure 7}.

Therefore, there exists  \(c>0\), such that
\begin{equation}
  \label{eq:2}
  \P_p[A_{2m}]\leq 4\P_p[\C_h([0,m],[0,2m])]\leq e^{-cN},
\end{equation}
which states the exponential decay of the radius of the cluster containing the origin.
\end{proof}

\subsection{Proof of Theorem \ref{lem0}}


\begin{proof}

Suppose by contradiction that there exists \(n\) such that \(
  \P_{p_c}[\C_h(n,2n)]<\frac{1}{25}\). By Proposition \ref{prop1}, there exists \(c>0\) such that
  \(\tau_p(0,\partial B_n)\leq e^{-nc}\). Next Remark \ref{rmk1} implies that there exists
  \(n_1\) such that \(N_{\square_{n_1}}<1\).

Let \(f_{n_1}: p\mapsto \E_p[N_{\square_{n_1}}]\), it is easy to check that
\(f_{n_1}\) is continuous and \( f_{n_1}(p_c)<1 \). 

By continuity, there exists \(\delta>0\) such that \(f_{n_1}(p_c+\delta)<1\). Now
Proposition \ref{prop2} gives us exponential decay of the connectivity function,
namely \(\P_{p_c+\delta}[A_n]\leq e^{-n\tilde{c}}\).

As we are at super-critical, it is impossible to have exponential decay.

Therefore, \(\P_{p_c}[\C_h(n,2n)]\geq \frac{1}{25}\).
\end{proof}

\section{Kesten's black box}

\subsection{Statement of the proposition}
It has been shown in Harry Kesten's book ~\cite{kesten1982percolation}
Lemma 6.4 that under appropriate condition, if the crossing probabilities of certain rectangles in both the horizontal and the vertical direction are bounded away from zero, then so are the crossing probabilities for larger rectangles.

We only consider site percolation on \(\G\), in which case
the proof is more accessible but still highlights the main ideas.

For readers not familiar with these kinds of arguments, it might appear quite technical. Therefore we will give a sketch prior to the actual formal proof, which tries to convain the main ideas.

The following proof will be valid on both \(\Z^2\) and \(\Z^{2,*}\), which is a consequence of the following fact: consider \(\G(\text{ either }\Z^2\text{ or }\Z^{2,*})\) as graph embedded in \(\R^2\), given any two curves in \(\G\), if they intersect in \(\R^2\), then their union is connected in \(\G\). This is not always true for non planar graphs, but here \(\Z^{2,*}\) is particular, as shown in Figure \ref{ZstarI}.

\begin{figure}[!h]
   \centering
 \includegraphics[width=.7\textwidth]{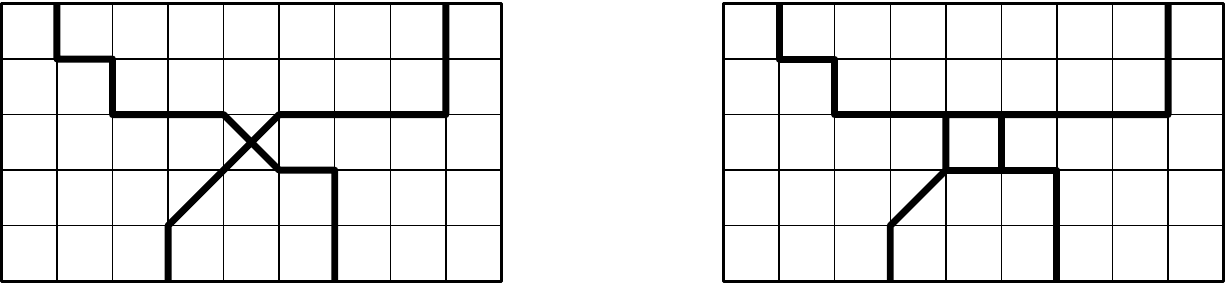}
 \caption{\it Two curves intersect at a point which is not a vertex of \(\Z^{2,*}\), but their union forms a connected component on \(\Z^{2,*}\).}
 \label{ZstarI}
\end{figure}

\begin{theorem}
  \label{kesten2}
Suppose that \(\P_p[\C_h(l_1,l_2)]\geq \delta_1 >0\) and
 \(\P_p[\C_v(l_3,l_2)]\geq \delta_2 >0\) for some integers
 \(l_1,l_2,l_3> 1\) with \(l_3\leq tl_1\) for
 some \(t\) and \(l_1\geq 48\).

Then for each \(k\) there exists a constant \(c(\delta_1,\delta_2,t,k)>0\)
such that \(\P_p[\C_h(kl_1,l_2)]\geq c>0\).
\end{theorem}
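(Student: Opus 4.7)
The plan is to prove the result by induction on $k$, establishing at each step an \emph{extension lemma}: from a horizontal crossing of $[0, l_1] \times [0, l_2]$ with probability $\geq \delta_1$, produce a horizontal crossing of $[0, \tfrac{3}{2} l_1] \times [0, l_2]$ with probability $\geq \delta' > 0$, where $\delta'$ depends only on $\delta_1, \delta_2$ and $t$. Iterating this $O(\log k)$ times (or $O(k)$ times naively) yields the desired constant $c(\delta_1, \delta_2, t, k) > 0$. The hypothesis $l_1 \geq 48$ is used only to guarantee that all the overlap strips and sub-rectangles we construct have integer side-lengths at least some fixed amount, so that the geometric picture is honest at the discrete level.

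For the extension step, I consider the two translated horizontal rectangles $R_0 = [0, l_1] \times [0, l_2]$ and $R_1 = [\tfrac{l_1}{2}, \tfrac{3 l_1}{2}] \times [0, l_2]$, together with the overlap strip $S = [\tfrac{l_1}{2}, l_1] \times [0, l_2]$. A horizontal crossing of $R_0 \cup R_1$ is guaranteed as soon as $R_0$ and $R_1$ each contain a horizontal crossing and some vertical crossing inside $S$ glues them into one connected occupied path. By translation invariance and hypothesis each horizontal event has probability $\geq \delta_1$; a vertical crossing of $S$ has probability $\geq \delta_2$ when $l_3 \leq l_1/2$ by monotonicity of vertical crossing probabilities in the width, and when $l_3$ is larger one can cover $S$ by overlapping $l_3$-wide vertical strips and use FKG to get a lower bound depending only on $\delta_2$ and $t$. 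All three events are increasing and thus positively correlated, so combining them only costs a multiplicative constant.

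The subtle point is that realising three increasing events simultaneously does not a priori produce a single connected crossing path: a generic vertical crossing of $S$ need not meet a given horizontal crossing. To force the intersection I would condition on the \emph{lowest} occupied horizontal crossing $\gamma$ of $R_0$, which is measurable with respect to the configuration on and below it, so that above $\gamma$ the configuration remains independent $p$-percolation. A reflection across the horizontal midline of the sub-rectangle above $\gamma$, combined with FKG, produces with conditional probability bounded below a vertical occupied crossing in $S$ running from $\gamma$ up to the top edge of $R_0$. Coupled (again via FKG) with a horizontal crossing of $R_1$, this yields a horizontal crossing of $R_0 \cup R_1$ and closes the extension step.

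The main obstacle is this gluing step. On $\Z^2$ planarity makes the lowest-crossing and reflection arguments standard, but on $\Z^{2,*}$ one must check at each use of ``curves meeting in $\R^2$ are connected in $\G$'' that the property recorded in the remark preceding the theorem (Figure \ref{ZstarI}) really applies to the edges we have in hand. A secondary difficulty is propagating the quantitative bound through the iteration: the output $\delta'$ of one extension becomes the input $\delta_1$ of the next, so one has to verify that the recursion $\delta' = F(\delta_1, \delta_2, t)$ does not collapse to zero after $O(\log k)$ iterations; this forces one to keep $F$ explicit enough to see that it is bounded below by a fixed power of $\delta_1$, which in turn gives the final bound $c(\delta_1, \delta_2, t, k)$.
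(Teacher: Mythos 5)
Your overall architecture (glue overlapping translates of the $l_1\times l_2$ rectangle using FKG, condition on the lowest crossing, use a reflection plus the square-root trick) is the right family of ideas, but two of your steps contain genuine gaps, and the first one is fatal as written.

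The fatal step is the claim that a vertical crossing of the narrow overlap strip $S=[\tfrac{l_1}{2},l_1]\times[0,l_2]$ has probability bounded below in terms of $\delta_2$ and $t$: when $l_3>l_1/2$ you propose to ``cover $S$ by overlapping $l_3$-wide vertical strips and use FKG.'' This goes in the wrong direction. A vertical crossing of a \emph{wide} rectangle $[0,l_3]\times[0,l_2]$ may wander horizontally across the whole width $l_3$ and gives no vertical crossing of any narrower strip; covering a narrow strip by wider ones tells you nothing. If one could deduce $\P_p[\C_v(l_1/2,l_2)]\geq f(\delta_2,t)$ this cheaply, the theorem (and indeed the whole RSW problem) would be trivial -- the entire difficulty is converting easy-direction crossings of wide, short rectangles (in the application $l_3=4l_1$) into hard-direction ones. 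The paper's substitute is twofold: Lemma \ref{lem1} does not ask for a vertical crossing of a narrow strip at all, but only for a path joining a \emph{given} crossing $r^-$ to the top edge inside the strip between $L_1$ and $L_2$ (of width $\tfrac{7}{4}l_1$, whence the constraint $l_3\leq\tfrac{7}{4}l_1$), obtained by reflecting $r$ across the \emph{vertical} line $x=l_1$ and applying the square-root trick; and Lemma \ref{lem2} reduces a general $l_3\leq tl_1$ to this regime by decomposing according to the horizontal extent $x_r(r)-x_l(r)$ of the vertical crossing. Nothing in your proposal replaces either ingredient; also note that your ``reflection across the horizontal midline of the sub-rectangle above $\gamma$'' is not a symmetry of anything (the region above the lowest crossing is not reflection-invariant), so the square-root trick cannot be applied there.

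The second gap is in the gluing itself. Even granting a path $s$ from the lowest crossing $\gamma$ of $R_0$ up to the top edge, the union $\gamma\cup s$ together with a horizontal crossing $\rho$ of $R_1$ need not be connected: $\rho$ can traverse the overlap region entirely \emph{below} $\gamma$ and never meet $\gamma$ or $s$. This is why the paper introduces both $E^-$ (an upward connection from the lowest crossing, on the event $Y(R^-)\leq m$) and $E^+$ (a downward connection from the highest crossing, on $Y(R^+)\geq m$), with $m$ chosen as a median level so that both events retain probability $(1-\sqrt{1-\delta_1})(1-\sqrt{1-\delta_2})$; only the conjunction produces a genuine top-to-bottom crossing $\psi$ of the overlap region, which any additional horizontal crossing is then forced to meet. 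Your concern about the recursion degenerating after many iterations is, by contrast, not a real obstacle: each extension lengthens the crossing by a fixed fraction $\tfrac{l_1}{8}$ at the multiplicative cost $(1-\sqrt{1-\delta_1})^2(1-\sqrt{1-\delta_2})^2$, so $O(k)$ steps with a fixed constant per step suffice.
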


 \begin{remark}
   Although the proof only concerns with \(\Z^2\) and \(\Z^{2,*}\), the idea can be applied to more general cases. In fact, we have the more general theorem \ref{kesten3} (we only give the proof of \ref{kesten2}). 
\begin{theorem}
\label{kesten3}
For any matching pair \( (\G,\G^*) \) satisfying: 
   \begin{enumerate}
   \item \(\G\) is planar or the union of any two curves intersecting on \(\G\) forms a connected component.
   \item \(\G\) is periodic and the second coordinate axis is an axis of symmetry.
   \item The percolation is of finite types and the measure is symmetric with respect to the second coordinate axis.
   \item The length of any edge of \(\G\) is bounded uniformly by \(\Lambda\).
   \end{enumerate}

For all \(\delta_1>0\), \(\delta_2>0\), for all \(k\in \Z^+\), there exists \(c(\delta_1,\delta_2,k)>0\) such that if 
\[\P_p[\C_h([0,l_1],[0,l_2])]\geq \delta_1 >0\]
\[\P_p[\C_v([0,l_3],[0,l_2])]\geq \delta_2 >0\] for some integers \(l_1,l_2,l_3\geq 1\) with \(l_1\geq 32+16\Lambda\), \(l_2>\Lambda\) (here \(\P_p\) stands for the site percolation on \(\G\)); Then \(\P_p[\C_h(kl_1,l_2)]\geq c>0\), moreover, \(\lim_{\delta_1,\delta_2\rightarrow 1}c(\delta_1,\delta_2,k)=1\).
\end{theorem}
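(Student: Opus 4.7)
The plan is to extend horizontal crossings iteratively, using a vertical crossing of an $l_3\times l_2$ rectangle as a ``bridge'' that glues two overlapping horizontal crossings of $l_1\times l_2$ rectangles into a single horizontal crossing of a wider rectangle. The scheme rests on the FKG (Harris) inequality applied to the three increasing crossing events involved, together with the matching-pair property recalled at the beginning of this section: in $\G$, two occupied curves meeting in $\R^2$ always have a connected union in $\G$.

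I would first treat the case $l_3\leq l_1$, which contains the main idea. Place two copies of an $l_1\times l_2$ rectangle with horizontal overlap of width $l_3$, namely $R_1=[0,l_1]\times[0,l_2]$ and $R_2=[l_1-l_3,\,2l_1-l_3]\times[0,l_2]$, and denote by $V=[l_1-l_3,l_1]\times[0,l_2]$ the overlap region. The three events $\C_h(R_1)$, $\C_h(R_2)$ and $\C_v(V)$ are increasing, so FKG and translation invariance yield
\[
\P_p\bigl[\C_h(R_1)\cap\C_h(R_2)\cap\C_v(V)\bigr]\geq \delta_1^{\,2}\delta_2.
\]
The key geometric step is to show that on this intersection, the three crossing curves glue into a horizontal crossing of the wider rectangle $[0,\,2l_1-l_3]\times[0,l_2]$: the vertical crossing of $V$ is a curve whose endpoints lie on the top and bottom edges of $R_1$ (and of $R_2$), so a Jordan-style crossing lemma forces it to meet both horizontal crossings $\C_h(R_1)$ and $\C_h(R_2)$. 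Here the matching-pair property is what allows this argument to be run on $\Z^{2,*}$ despite its lack of planarity.

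Iterating this elementary step $n$ times extends the horizontal crossing by an additive amount $n(l_1-l_3)$. To reach width at least $kl_1$ it suffices to take $n=\lceil (k-1)/(1-l_3/l_1)\rceil$, which is bounded in terms of $k$ and $t$ alone (via $l_3/l_1\leq t$). At each step FKG costs a multiplicative factor of at most $\delta_1\delta_2$, so the final lower bound has the form $c(\delta_1,\delta_2,t,k)\geq \delta_1^{\,n+1}\delta_2^{\,n}>0$.

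The case $l_3>l_1$ (i.e.\ $t>1$) requires a preliminary bootstrap before the main iteration: one applies the symmetric version of the gluing, with the roles of the horizontal and vertical crossings exchanged, in order to first enlarge the horizontal dimension on which a horizontal crossing is known to exist with positive probability, until one is reduced to the previous case. The assumption $l_1\geq 48$ provides enough integer room for the underlying geometric constructions, in particular so that suitable half- and quarter-sub-rectangles still have non-degenerate integer dimensions. The main obstacle I anticipate is this last geometric check inside the matching pair: rigorously verifying that two $\Z^{2,*}$-curves meeting in $\R^2$ at a non-vertex crossing of close-packing edges (as in Figure~\ref{ZstarI}) indeed yield a connected subgraph of $\Z^{2,*}$ requires a short case analysis of the four possible local configurations. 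Once this topological step is settled, the remainder is routine inductive FKG bookkeeping.
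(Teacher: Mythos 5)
There is a genuine gap, and it sits exactly where the theorem has content. Your iteration extends the crossing by $l_1-l_3$ per step, so the number of gluings needed to reach width $kl_1$ is of order $(k-1)l_1/(l_1-l_3)$. The hypothesis bounds $l_3/l_1$ from \emph{above}; it gives no lower bound on $1-l_3/l_1$, so this count is not controlled by $(\delta_1,\delta_2,k)$ alone: for $l_3=l_1-1$ you need on the order of $(k-1)l_1$ steps and your bound $\delta_1^{\,n+1}\delta_2^{\,n}$ degenerates as $l_1\to\infty$ (which also kills the claim $c\to1$ as $\delta_1,\delta_2\to1$). Worse, Theorem \ref{kesten3} imposes \emph{no} relation between $l_3$ and $l_1$, and the case the paper actually needs is $l_3=4l_1$, where your rectangles $R_1,R_2$ cannot even be placed with positive overlap deficit. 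The ``symmetric bootstrap'' you sketch for $l_3>l_1$ does not repair this: exchanging horizontal and vertical roles lets you lengthen crossings, but what is required is a vertical crossing of a \emph{narrower} rectangle, and gluing never shrinks a dimension. (The topological point you flag --- that intersecting curves in $\Z^{2,*}$ have connected union --- is real but is the easy part; the paper disposes of it with Figure \ref{ZstarI}.)

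The paper's route is structurally different and is designed precisely to avoid needing the vertical rectangle to fit inside an overlap. Lemma \ref{lem1} fixes a crossing $r$ of $[0,l_1]\times[0,l_2]$, reflects it across $x=l_1$, and uses the square-root trick $(1-\P[E_1])(1-\P[E_2])\leq 1-\P[E_1\cup E_2]$ together with the reflection symmetry to show that an upward connection from $r^-$ to the top of $H=[\tfrac{l_1}{8},\tfrac{15l_1}{8}]\times[0,l_2]$ occurs with probability at least $1-\sqrt{1-\delta_2}$; here the vertical crossing of width $l_3\leq\tfrac74 l_1$ is used only to guarantee that it hits $r^-\cup\tilde r^-$, not to sit inside an overlap. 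Lemma \ref{lll4} then conditions on the lowest and highest crossings and a median level $m$ so that the upward and downward connections combine into a vertical crossing of $H$, yielding an extension by $\tfrac{l_1}{8}$ per step at a cost of $(1-\sqrt{1-\delta_1})^2(1-\sqrt{1-\delta_2})^2$ --- a number of steps depending only on $k$. Finally, Lemma \ref{lem2} removes the restriction on $l_3$ by pigeonholing the leftmost and rightmost abscissae of a vertical crossing into intervals of length $\lambda l_1$, producing either a vertical crossing of width at most $\tfrac74 l_1$ or a crossing pair with $\tilde l_3\leq\tfrac74\tilde l_1$. None of these three ingredients appears in your proposal, and without them the constant cannot be made independent of $l_1$ and $l_3$. (Note also that the paper states Theorem \ref{kesten3} without proof and proves only Theorem \ref{kesten2}; the comparison above is with that proof, which is the relevant argument.)
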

 \end{remark}

To prove Theorem \ref{kesten2}, we first prove the following weaker result:
\begin{proposition}
\label{kesten1}
 Suppose that \(\P_p[\C_h(l_1,l_2)]\geq \delta_1 >0\) and
 \(\P_p[\C_v(l_3,l_2)]\geq \delta_2 >0\) for some integers
 \(l_1,l_2,l_3\geq 1\) with \(l_3\leq \frac{7}{4}l_1, l_1\geq 48,
 l_2>1\).

 \begin{figure}[!h]
   \centering
     \includegraphics[width=.85\textwidth]{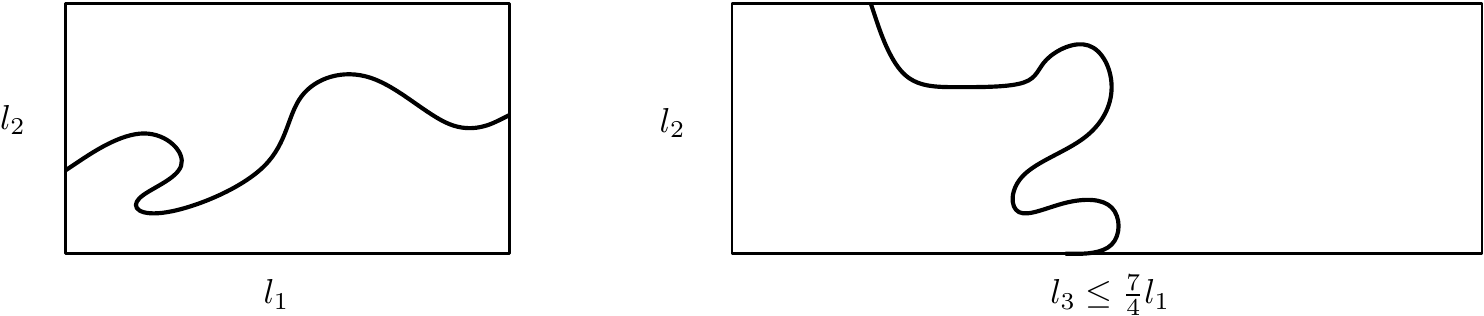}
 \end{figure}

Then for each \(k\) there exists \(c(\delta_1,\delta_2,k)>0\) such
that \(\P_p[\C_h(kl_1,l_2)]\geq c>0\).  
\end{proposition}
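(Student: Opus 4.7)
The plan is to prove the statement by induction on $k$, showing that $\P_p[\C_h(k l_1, l_2)] \geq c_k > 0$ for a positive sequence $c_k = c_k(\delta_1, \delta_2)$. The base case $k=1$ is exactly the hypothesis $\P_p[\C_h(l_1, l_2)] \geq \delta_1$. The induction step proceeds by gluing the horizontal crossing of $[0, k l_1] \times [0, l_2]$ already given by the inductive hypothesis to a translate of the base rectangle, namely $[k l_1, (k+1) l_1] \times [0, l_2]$, using a suitably placed vertical crossing of a $l_3 \times l_2$ rectangle as a ``bridge'' between them.

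Concretely, one introduces three increasing events: $A = \C_h(k l_1, l_2)$ (probability $\geq c_k$ by induction), $B = \C_h([k l_1, (k+1) l_1], [0, l_2])$ (probability $\geq \delta_1$ by translation invariance), and $C = \C_v([k l_1 - l_3/2, k l_1 + l_3/2], [0, l_2])$ (probability $\geq \delta_2$ by translation invariance). All three events are increasing in the configuration, so the FKG inequality yields $\P_p[A \cap B \cap C] \geq c_k \delta_1 \delta_2 =: c_{k+1}$. When the three events all occur, the topological intersection fact --- a left-to-right occupied path and a top-to-bottom occupied path sharing a common sub-rectangle must meet, a property valid on both graphs of the matching pair, cf.\ Figure~\ref{ZstarI} --- allows one to concatenate the three corresponding occupied paths into a single horizontal crossing of the combined rectangle $[0, (k+1) l_1] \times [0, l_2]$.

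The main obstacle will be the geometric verification that a single vertical crossing $\eta$ of the middle rectangle genuinely intersects \emph{both} horizontal crossings $\gamma_A, \gamma_B$. When $l_3 \leq l_1$ this is immediate: the vertical rectangle can be placed inside the overlap of the two horizontal rectangles, and the planar intersection fact applies on each side of $x = k l_1$. The delicate regime is $l_1 < l_3 \leq \tfrac{7}{4} l_1$, in which the $l_3$-wide vertical rectangle is wider than each horizontal rectangle and may ``slip off'' to one side without meeting one of the $\gamma$'s. Handling this case requires either positioning the vertical rectangle asymmetrically around $x = k l_1$, or invoking the horizontal symmetry of the measure to combine two shifted configurations via FKG. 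The numerical hypotheses $l_3 \leq \tfrac{7}{4} l_1$, $l_1 \geq 48$ and $l_2 > 1$ supply precisely the margin needed for this construction to fit inside $[0, (k+1) l_1] \times [0, l_2]$ while keeping enough overlap on each side of $x = k l_1$ for the intersection argument to apply. Iterating the step then produces the uniform lower bound $c(\delta_1, \delta_2, k) > 0$ claimed in the proposition.
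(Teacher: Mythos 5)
Your induction step does not work, and the place where it breaks is precisely the difficulty that the whole of Kesten's argument is designed to overcome. The two horizontal rectangles $[0,kl_1]\times[0,l_2]$ and $[kl_1,(k+1)l_1]\times[0,l_2]$ in your events $A$ and $B$ share only the segment $x=kl_1$; they have no two-dimensional overlap. A vertical crossing $\eta$ of $[kl_1-l_3/2,\,kl_1+l_3/2]\times[0,l_2]$ is only \emph{forced} to meet a horizontal crossing $\gamma$ of a rectangle $R$ of the same height when the vertical rectangle is \emph{contained} in $R$ (so that $\gamma$ contains a sub-path crossing the vertical rectangle left-to-right). Since no rectangle of positive width is contained in both $[0,kl_1]\times[0,l_2]$ and $[kl_1,(k+1)l_1]\times[0,l_2]$, your $\eta$ can stay entirely in $\{x>kl_1\}$ and miss $\gamma_A$, or entirely in $\{x<kl_1\}$ and miss $\gamma_B$. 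So $A\cap B\cap C$ does not imply $\C_h((k+1)l_1,l_2)$, and the claim that the case $l_3\le l_1$ is ``immediate'' is false: there is no overlap to place the vertical rectangle in. The only honest patch along these lines --- overlapping translates $[0,l_1]$ and $[l_1-l_3,2l_1-l_3]$ with the vertical rectangle placed exactly in the overlap --- gains only $l_1-l_3$ per step, which can be $0$ or $1$ since nothing prevents $l_3\ge l_1-1$ (indeed $l_3$ may be as large as $\tfrac74 l_1$); the resulting constant would then depend on $l_1,l_3$ and not only on $(\delta_1,\delta_2,k)$ as the statement requires.

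The paper's proof takes a genuinely different route that you will need: it reflects the box $[0,l_1]\times[0,l_2]$ across $x=l_1$, and for the \emph{lowest} crossing $R^-$ (resp.\ highest crossing $R^+$) it uses conditional independence of the region above (resp.\ below) that crossing together with the square-root trick (Lemma~\ref{lem1}) to attach, with probability at least $1-\sqrt{1-\delta_2}$, an occupied arm from the crossing to the top (resp.\ bottom) of the strip, staying in $H=[\tfrac{l_1}{8},\tfrac{15}{8}l_1]\times[0,l_2]$; the hypothesis $l_3\le\tfrac74 l_1$ is what guarantees the vertical crossing of $[\tfrac{l_1}{8},\tfrac{l_1}{8}+l_3]\times[0,l_2]$ must hit $r^-\cup\tilde r^-$, and the median level $m$ of $Y(R^-)$ (Lemma~\ref{lll4}) guarantees the two arms can be combined into a vertical crossing of $H$. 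Only then does an FKG gluing with one more horizontal crossing of $[\tfrac{l_1}{8},\tfrac98 l_1]\times[0,l_2]$ produce a crossing of the strictly longer box $[0,\tfrac98 l_1]\times[0,l_2]$, with a probability cost depending only on $\delta_1,\delta_2$; iterating gains a fixed fraction $l_1/8$ per step, which is what makes the final constant depend only on $(\delta_1,\delta_2,k)$. None of this machinery (reflection symmetry, lowest-crossing decomposition, conditional independence, the choice of $m$) appears in your proposal, and without it the gluing step has no topological force.
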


If we relax the condition \(l_3\leq\frac{7}{4}l_1\) in Proposition \ref{kesten1}
 we have Theorem \ref{kesten2}. Although we need Theorem \ref{kesten2} for our goal, the proof of 
 Proposition \ref{kesten1} already shows the most important argument.

Lemma \ref{lem1} and Lemma \ref{lll4} will serve to demonstrate Proposition \ref{kesten1} and Lemma \ref{lem2} allows us to relax the condition \(l_3\leq \frac{7}{4}l_1\).

\subsection{Step one: Lemma \ref{lem1}}

The main idea of the first step is to use Harris inequality to give a lower bound for certain event, we focus on the box
\([0,l_1]\times[0,l_2]\) and its reflection by the axis \(x=l_1\).

Let \begin{itemize} 
\item \(L_1=\{ x=\frac{l_1}{8} \}\) ,
\item \(L_2=\{ x=\frac{15l_1}{8} \} \).
\end{itemize}

Consider a fixed horizontal crossing \(r\)
of our rectangle, and it's mirror reflection \(\tilde{r}\) by the axis \(x=l_1\). Let \(r^-\) be the curve from the last intersection of \(r\) and \(L_1\) to the right edge of \([0,l_1]\times[0,l_2]\), also let \(\tilde{r}^-\) be the mirror reflection of \(r^-\).

And let \(B,T,L,R\) denote respectively the bottom, top, left and right side
of the entire rectangle \([0,2l_1]\times[0,l_2]\).

Let \(D(r)\) be the event that there exists a path \(s=v_0\rightarrow v_1\rightarrow \cdots \rightarrow v_z\) such that
\begin{itemize}
\item \(s\) connects \(r^-\) to \(T\),
\item \(s\) stays between \(L_1,L_2\) inside the rectangle and above the union of \(r^-\) and \(\tilde{r}^-\),
\item \(v_1,\cdots,v_z\) are occupied but \(v_0\) is not required to be occupied,
\end{itemize}
 as shown in Figure \ref{Figure 8}.

More formally,

\(D(r)=\{ \exists \text{ path } s=(v_0,...,v_z)\) such that 
\(v_1,...,v_z\) are occupied, \(v_0\) is not required to be occupied, \(v_0\rightarrow v_1\text{ intersects } r^-,v_z\in T,\) and \( \{
v_1,...,v_z\}\subset [\frac{l_1}{8},\frac{15l_1}{8}]\times[0,l_2]\)
and do not intersect \(r^-\cup\tilde{r}^- \} \).

\begin{figure}[!h]
\centering
\includegraphics[width=.8\textwidth]{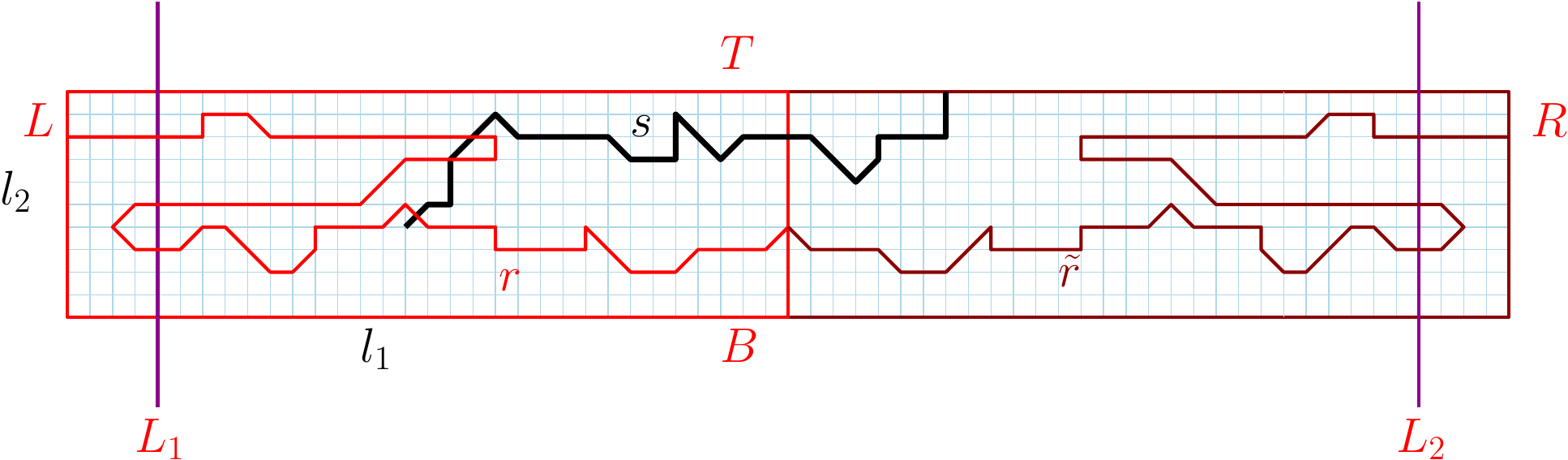}
\caption{\it The event \(D(r)\).}
\label{Figure 8}
\end{figure}

\begin{lemma}
\label{lem1}
For a fixed path \(r\),
  \begin{equation}
    \label{eq:3}
    \P_p[D(r)]\geq 1-\sqrt{1-\delta_2}.
  \end{equation}
\end{lemma}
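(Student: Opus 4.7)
The plan is to deduce the lemma from the vertical crossing hypothesis $\P_p[\C_v(l_3,l_2)]\geq \delta_2$ together with Harris's FKG inequality and the reflection symmetry about the axis $x=l_1$. The key reformulation is to let $\tilde{D}(r)$ denote the reflected event, i.e.\ the event $D(\tilde r)$ asking for an occupied path from $\tilde r^-$ to $T$, lying between $L_1$ and $L_2$ and above $r^-\cup \tilde r^-$. Since the distributions of both $\Z^2$ and $\Z^{2,*}$ are invariant under reflection about $x=l_1$, and since this reflection exchanges $r\leftrightarrow \tilde r$ and $r^-\leftrightarrow \tilde r^-$, we have $\P_p[D(r)]=\P_p[\tilde{D}(r)]$. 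Moreover both $D(r)$ and $\tilde{D}(r)$ are increasing events (they demand the existence of occupied paths), so their complements are decreasing.

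Next, I would introduce the vertical crossing rectangle $Q=[l_1-l_3/2,\,l_1+l_3/2]\times[0,l_2]$, placed symmetrically about $x=l_1$. Since $l_3\leq \tfrac{7}{4}l_1$, we have $l_1-l_3/2\geq l_1/8$ and $l_1+l_3/2\leq 15l_1/8$, so $Q$ is contained in the strip $[l_1/8,15l_1/8]\times[0,l_2]$. By translation invariance, the event $V$ that there exists a vertical occupied crossing of $Q$ has probability at least $\delta_2$.

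The central geometric claim is that $V\subset D(r)\cup \tilde{D}(r)$. Indeed, $r^-\cup \tilde r^-$ is a continuous curve joining $L_1$ to $L_2$, so it separates the top and bottom of the strip in $\R^2$. Any vertical occupied crossing $s$ of $Q$ must therefore meet $r^-\cup \tilde r^-$; this uses planarity of $\Z^2$, or for $\Z^{2,*}$ the special intersection property recorded in Figure~\ref{ZstarI} which is precisely designed to make this topological argument go through. Taking the portion $s^+$ of $s$ strictly above its last intersection with $r^-\cup\tilde r^-$, we obtain an occupied path that meets $r^-$ (giving $D(r)$) or $\tilde r^-$ (giving $\tilde{D}(r)$), and whose remaining vertices sit inside the strip and above $r^-\cup\tilde r^-$ as required. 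The last edge leaving $r^-$ or $\tilde r^-$ realises the initial edge $v_0\to v_1$ in the definition of $D$.

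Finally, by the FKG inequality applied to the decreasing events $D(r)^c$ and $\tilde{D}(r)^c$,
\begin{equation*}
1-\delta_2 \;\geq\; \P_p[V^c] \;\geq\; \P_p[D(r)^c\cap \tilde{D}(r)^c] \;\geq\; \P_p[D(r)^c]\,\P_p[\tilde{D}(r)^c] \;=\; \P_p[D(r)^c]^2,
\end{equation*}
from which $\P_p[D(r)]\geq 1-\sqrt{1-\delta_2}$ follows by taking square roots. The only delicate point is the topological claim that $s$ must cross $r^-\cup \tilde r^-$; everything else is a clean FKG-plus-symmetry calculation.
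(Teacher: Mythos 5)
Your proof is correct and follows essentially the same route as the paper: reflection symmetry about $x=l_1$ to get $\P_p[D(r)]=\P_p[D(\tilde r)]$, the observation that a vertical crossing of a suitable $l_3\times l_2$ rectangle inside the strip must hit $r^-\cup\tilde r^-$ (so $V\subset D(r)\cup D(\tilde r)$), and the Harris/FKG square-root trick. The only difference is cosmetic -- you centre the vertical rectangle at $x=l_1$ while the paper takes $[\tfrac{l_1}{8},\tfrac{l_1}{8}+l_3]\times[0,l_2]$; both fit in the strip under $l_3\leq\tfrac{7}{4}l_1$.
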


\begin{proof}
  The main tool is the Harris inequality\footnote{ Some authors call it {\it the
  FKG inequality}}: when \(E_1,E_2\) are both increasing (or decreasing) events, one has
  \(\P[E_1\cap E_2]\geq \P[E_1]\P[E_2]\). This can also
  be rewritten as 
  \begin{equation}
    \label{eq:4}
    (1-\P[E_1])(1-\P[E_2])\leq 1-\P[E_1\cup E_2].
  \end{equation}

Let \(D(\tilde{r})\) be the symmetric event of \(D(r)\): {\it there exists
a path \(s'\) which is the same as \(s\) except that it connects
\(\tilde{r}^-\) to \(T\).}

\begin{figure}[!h]
\centering
\includegraphics[width=.8\textwidth]{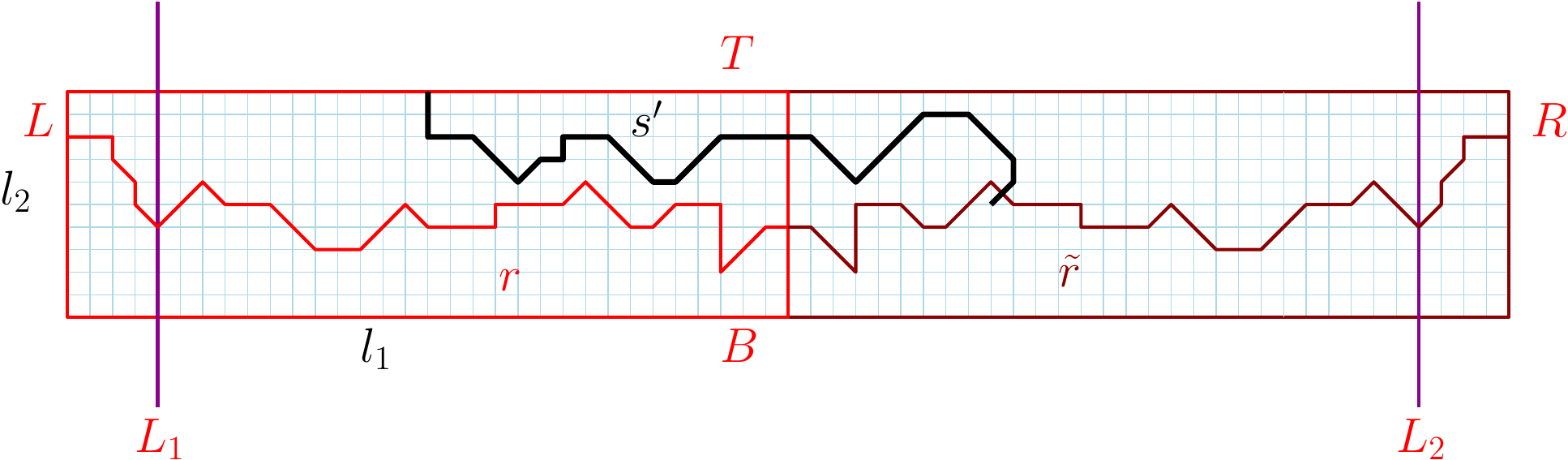}
\caption{\it The event \(D(\tilde{r})\).}
\label{Figure 9}
\end{figure}

By symmetry, it follows that \(\P_p[D(r)]=\P_p[D(\tilde{r})]\), applying \eqref{eq:4}
with \(E_1=D(r),E_2=D(\tilde{r})\), \[\P_p[D(r)]\geq
1-\sqrt{1-\P_p[D(r)\cup D(\tilde{r})]}.\]

To achieve our goal, it is enough to show that \(\P_p[D(r)\cup D(\tilde{r})]\geq \delta_2\).

To see this let us consider the rectangle
\([\frac{l_1}{8},\frac{l_1}{8}+l_3]\times[0,l_2]\) and assume that
there is a vertical crossing \(t\) of this rectangle, it is clear that
\(t\) will intersect \(r^-\cup\tilde{r}^-\) between \(L_1,L_2\). Hence
either \(D(r)\) or \(D(\tilde{r})\) occurs.

\begin{figure}[!h]
\label{Figure 10}
\centering
\includegraphics[width=.8\textwidth]{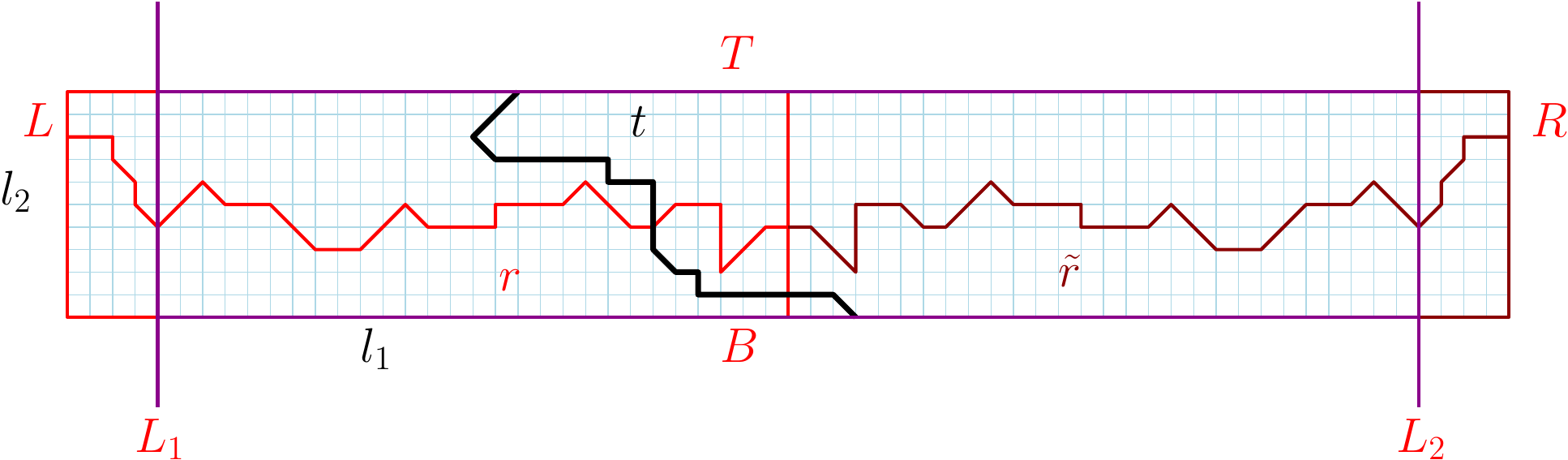}
\caption{\it A vertical crossing implies \(D(r)\) or \(D(\tilde{r})\).}
\end{figure}

Therefore, \[\P_p[D(r)]\geq
1-\sqrt{1-\P_p[D(r)\cup D(\tilde{r})]}\geq 1-\sqrt{1-\delta_2}.\]
\end{proof}

\subsection{Step two: Lemma \ref{lll4}}
Turning to the second step, an important remark on crossing events is
that if a rectangle is crossed horizontally, then there must be a
lowest (and a highest) crossing. One can find such a crossing using percolation interface
exploration.

If \(r\) is a fixed horizontal crossing of \([0,l_1]\times[0,l_2]\),
denote by \(Y(r)\) the second coordinate of the last intersection of
\(r\) with \(L_1\), as shown in Figure \ref{Figure 11}.

\begin{figure}[!h]
\centering
\includegraphics[width=.8\textwidth]{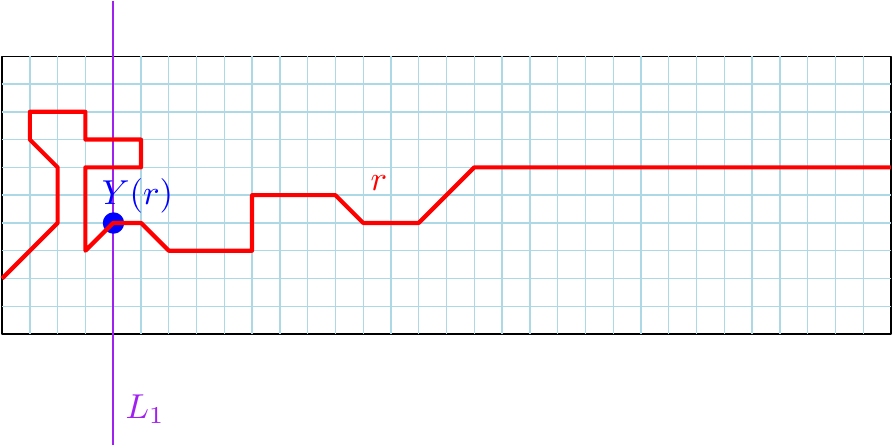} 
\caption{\it \((\frac{l_1}{8},Y(r))\) is the last point of the intersection of \(r\) and \(L_1\).}
\label{Figure 11}
\end{figure}

Let \(R^-\) be the lowest crossing of \([0,l_1]\times[0,l_2]\), let 
\(\epsilon=\frac{1}{\delta_1}(\sqrt{1-\delta_1}-(1-\delta_1))\) and
\(m\in \mathbb{R}\) be such that 
\begin{equation}
  \label{eq:5}
  \P_p[R^-\text{ exists and }Y(R^-)\leq m]\geq (1-\epsilon)\P_p[R^-\text{ exists}],
\end{equation}
\begin{equation}
  \label{eq:6}
  \P_p[R^-\text{ exists and }Y(R^-)< m]\leq (1-\epsilon)\P_p[R^-\text{ exists}].
\end{equation}

As the lattice space is discrete, it follows that such \(m\) is well defined.

As in Lemma \ref{lem1}, let \(T,B\) be the top and bottom edge of
\([0,2l_1]\times[0,l_2]\), and let
\(H=[\frac{l_1}{8},\frac{15l_1}{8}]\times[0,l_2]\).
 
Let \(E^-\) denote the event \(\cup_{r,Y(r)\leq m}\{ R^-=r\text{ and }D(r)\}\).

\begin{figure}[!h]
\label{Figure 12}
\centering
\includegraphics[width=.8\textwidth]{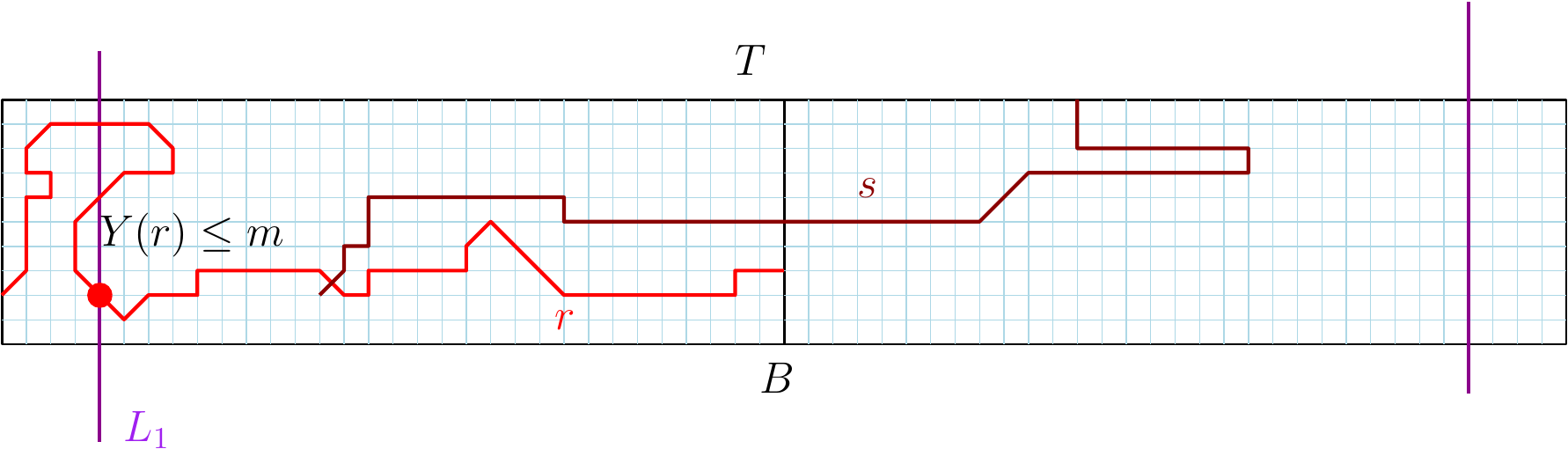} 
\caption{\it The event \(E^-\).}
\end{figure}

Respectively let \(E^+\) be the event \(\cup_{r,Y(r)\geq m}\{R^+=r\text{ and }D'(r)\}\), where \(D'(r)\) is the event that there exists a path \(s'\) which is the same as the path in the event \(D(r)\), except
 that it connects \(r^-\) to \(B\). Note that using the same argument as in Lemma \ref{lem1}, \(\P[D'(r)]\geq 1-\sqrt{1-\delta_2}\).

\begin{figure}[!h]
\label{Figure 13}
\centering
\includegraphics[width=.8\textwidth]{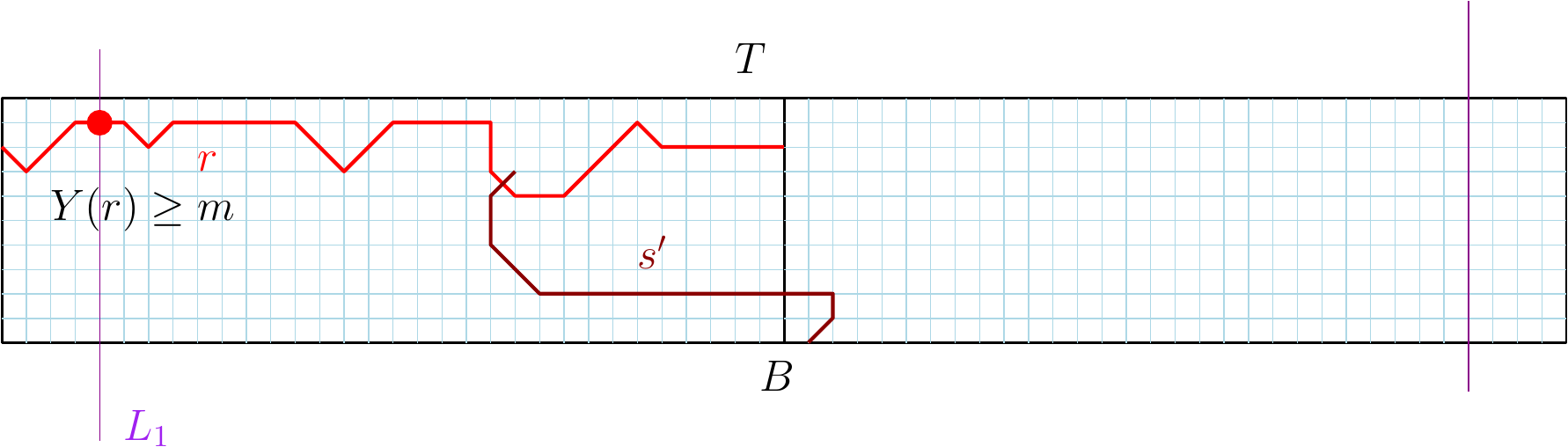}
\caption{\it The event \(E^+\).} 
\end{figure}

\begin{lemma}
\label{lll4}
  \begin{equation}
    \label{eq:7}
    \P_p[E^-]\geq (1-\sqrt{1-\delta_1})(1-\sqrt{1-\delta_2}),
  \end{equation}
  \begin{equation}
    \label{eq:8}
    \P_p[E^+]\geq (1-\sqrt{1-\delta_1})(1-\sqrt{1-\delta_2}).
  \end{equation}
\end{lemma}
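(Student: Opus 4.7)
The plan is to decompose $E^-$ as a disjoint union over the random lowest crossing $R^-$ and to exploit the independence, for each fixed curve $r$, between the event $\{R^-=r\}$ and the upward excursion event $D(r)$, since these two events read the configuration on disjoint site-sets.

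Concretely, I would write
\[
\P_p[E^-]=\sum_{r:\,Y(r)\le m}\P_p\bigl[R^-=r,\,D(r)\bigr],
\]
where the sum runs over deterministic horizontal crossings $r$ of $[0,l_1]\times[0,l_2]$ with $Y(r)\le m$. For a fixed such $r$ the event $\{R^-=r\}$ is measurable with respect to the states of sites of $[0,l_1]\times[0,l_2]$ lying on or strictly below $r$ ($r$ must be occupied and no crossing of the left rectangle may lie strictly lower). By contrast $D(r)$ only reads the states of sites lying strictly above $r^-\cup\tilde r^-$ inside $[l_1/8,15l_1/8]\times[0,l_2]$: the vertices $v_1,\ldots,v_z$ must be occupied, are confined to that strip, and cannot cross the barrier $r^-\cup\tilde r^-$ because the strip offers no sidewall detour. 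These two site-sets are disjoint, so by independence of the Bernoulli field,
\[
\P_p[R^-=r,\,D(r)]=\P_p[R^-=r]\,\P_p[D(r)]\ge(1-\sqrt{1-\delta_2})\,\P_p[R^-=r],
\]
the uniform bound $\P_p[D(r)]\ge 1-\sqrt{1-\delta_2}$ being exactly Lemma \ref{lem1}.

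Summing over $r$ with $Y(r)\le m$ and using \eqref{eq:5} together with $\P_p[R^-\text{ exists}]=\P_p[\C_h(l_1,l_2)]\ge\delta_1$, one obtains
\[
\P_p[E^-]\ge(1-\sqrt{1-\delta_2})(1-\epsilon)\delta_1.
\]
A direct computation from the explicit choice of $\epsilon$ gives $(1-\epsilon)\delta_1=\delta_1-\sqrt{1-\delta_1}+(1-\delta_1)=1-\sqrt{1-\delta_1}$, which is precisely \eqref{eq:7}. Inequality \eqref{eq:8} then follows from the top--bottom mirror image of the same argument: by vertical symmetry of the model the highest crossing $R^+$ admits an analogous quantile in place of $m$, and the proof of Lemma \ref{lem1}, with ``top'' replaced by ``bottom'', gives $\P_p[D'(r)]\ge 1-\sqrt{1-\delta_2}$.

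The only non-routine point in this plan is the disjoint-support claim underlying the independence; in particular, I need to confirm that a path witnessing $D(r)$ genuinely cannot dip below $r^-\cup\tilde r^-$ without intersecting it, which relies on $H=[l_1/8,15l_1/8]\times[0,l_2]$ having no lateral escape so that the curve $r^-\cup\tilde r^-$ really acts as a barrier inside $H$. Granting that, the remainder is purely algebraic, and the specific value of $\epsilon$ was engineered precisely so that the product $(1-\epsilon)\delta_1$ collapses to $1-\sqrt{1-\delta_1}$.
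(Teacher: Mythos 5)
Your proof of \eqref{eq:7} is essentially the paper's: decompose $E^-$ into the disjoint events $\{R^-=r\}\cap D(r)$, use independence of $\{R^-=r\}$ (measurable with respect to sites on or below $r$) and $D(r)$ (measurable with respect to sites above $r^-\cup\tilde r^-$ inside $H$), apply Lemma \ref{lem1} and \eqref{eq:5}, and check that $(1-\epsilon)\delta_1=1-\sqrt{1-\delta_1}$. That half is correct.

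The gap is in \eqref{eq:8}. The events $E^-$ and $E^+$ are defined with the \emph{same} level $m$, and this is essential for the later step where $E^-\cap E^+$ must force a vertical crossing of $H$ (one needs $Y(r')\le m\le Y(r'')$). But $m$ was defined in \eqref{eq:5}--\eqref{eq:6} as a quantile of $Y(R^-)$, not of $Y(R^+)$. The top--bottom reflection about $y=l_2/2$ sends the lowest crossing to the highest and $Y$ to $l_2-Y$, so symmetry only yields $\P_p[R^+\text{ exists and }Y(R^+)\ge l_2-m]\ge(1-\epsilon)\delta_1$, a bound at level $l_2-m$, which is not $m$ unless $m=l_2/2$. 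Replacing $m$ by ``an analogous quantile'' for $R^+$, as you propose, proves a bound for a different event, so the lemma as stated is not obtained, and the modified events would no longer be guaranteed to produce the vertical crossing in Step three. The missing ingredient is precisely \eqref{eq:9}, namely $\P_p[R^+\text{ exists and }Y(R^+)\ge m]\ge(1-\epsilon)\delta_1$ with the \emph{same} $m$; the paper derives it by applying the Harris inequality in the form \eqref{eq:4} to the two increasing events $A^+=\{\exists\text{ crossing }r,\ Y(r)\ge m\}$ and $A^-=\{\exists\text{ crossing }r,\ Y(r)<m\}$, whose union is $\C_h(l_1,l_2)$, combined with the defining property \eqref{eq:6} of $m$; the value of $\epsilon$ is engineered so that $1-\frac{1-\delta_1}{1-(1-\epsilon)\delta_1}=(1-\epsilon)\delta_1$. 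Once \eqref{eq:9} is established, your argument for $E^+$ (with $D'(r)$ connecting to $B$) goes through verbatim.
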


\begin{proof}
Firstly we prove \eqref{eq:7}, recall that \(E^-=\cup_{r,Y(r)\leq m}\{R^-=r\text{ and }D(r)\}\), where the union is over all horizontal crossings
\(r\) of \([0,l_1]\times[0,l_2]\) with \(Y(r)\leq m\).

Note that all events in this union are disjoint.

Therefore, \[\P_p[E^-]= \sum_{r,Y(r)\leq m}\P_p[R^-=r]\P_p[D(r)\vert
R^-=r].\] If \(R^-=r\) occurs, by the independent property of percolation interface exploration, as \(D(r)\) only depends on the edges above \(R^-\), \(\P_p[D(r)\vert R^-=r]=\P_p[D(r)]\) , therefore, 
\begin{align*}
\sum_{r,Y(r)\leq m}\P_p[R^-=r]\P_p[D(r)\vert R^-=r]&=\sum_{r,Y(r)\leq m}\P_p[R^-=r]\P_p[D(r)]\\
&\geq (1-\sqrt{1-\delta_2})\P_p[R^- \text{ exists and }Y(R^-)\leq m]\\
&\geq (1-\sqrt{1-\delta_2})(1-\epsilon)\delta_1\\
&=(1-\sqrt{1-\delta_1})(1-\sqrt{1-\delta_2}).
\end{align*}

Turning to the proof of \eqref{eq:8}, we are going to use the same idea as in the proof of  \eqref{eq:7}, but
first we need to prove the following counterpart of \eqref{eq:5} :
\begin{equation}
  \label{eq:9}
  \P_p[R^+ \text{exists and }Y(R^+)\geq m]\geq (1-\epsilon)\delta_1,
\end{equation}
where \(R^+\) denotes the highest occupied horizontal crossing of
\([0,l_1]\times[0,l_2]\).

We have assumed that \(\P_p[R^- \text{ exists and } Y(R^-)<m]\leq (1-\epsilon)\P_p[R^- \text{ exists}]\),
in the sequel we are going to transfer this inequality into \eqref{eq:9}.

The idea to prove \eqref{eq:9} is shown in the diagram below.

\begin{figure}[!h]
  \centering
  \includegraphics[width=.8\textwidth]{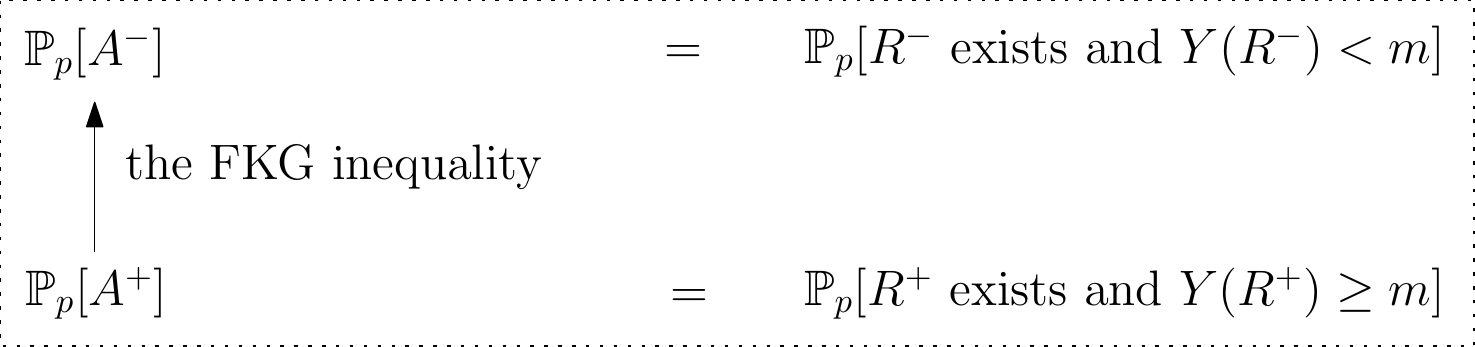}
\end{figure}

Firstly if there exists an occupied crossing \(r\) of \([0,l_1]\times[0,l_2]\), then
both \(R^-\) and \(R^+\) exist, moreover,
\begin{equation}
  \label{eq:10}
  Y(R^+)\geq Y(r)\geq Y(R^-).
\end{equation}

Let \(A^+ (\text{respectively }A^-)\) be the event \(\{\text{\it there exists an occupied horizontal crossing }r \text{\it\hspace{0.05cm} and } Y(r)\geq m\) \((\text{respectively }Y(r)<m)\}\), we have 

 \[\P_p[A^+]= \P_p[R^+ \text{ exists and } Y(R^+)\geq m],\]
 \[\P_p[A^-]= \P_p[R^- \text{ exists and } Y(R^-)< m].\]

As the events \(A^+\) and \(A^-\) are increasing and their union is the event \(\C_h(l_1,l_2)\) of probability \(\delta_1\),
we can apply the FKG inequality \eqref{eq:4} to this two increasing events, we have 

\begin{align*}
\P_p[R^+ \text{exists and }Y(R^+)\geq m]&=\P_p[A^+]\\
&\geq 1-\frac{1-\P_p[\C_h(l_1,l_2)]}{1-\P_p[A^-]}\\
&\geq 1-\frac{1-\delta_1}{1-\P_p[R^-\text{ exists and } Y(R^-)<m]}\\
&\geq 1-\frac{1-\delta_1}{1-(1-\epsilon)\delta_1}\\
&= (1-\epsilon)\delta_1.
\end{align*}

With \eqref{eq:9} we can reproduce the proof of \eqref{eq:7} to obtain \eqref{eq:8}, more precisely: 

\begin{align*}
\P_p[E^+]&=\sum_{r', Y(r')\geq m}\P_p[R^+=r']\P_p[D'(r')\vert R^+=r']\\
&\geq \sum_{r',Y(r')\geq m}\P_p[R^+=r']\P_p[D'(r')]\\
&\geq (1-\sqrt{1-\delta_2})\P_p[R^+ \text{ exists and } Y(R^+)\geq m]\\
&\geq (1-\sqrt{1-\delta_2})(1-\epsilon)\delta_1\\
&=(1-\sqrt{1-\delta_2})(1-\sqrt{1-\delta_1}).
\end{align*}

\end{proof}

\subsection{Step three: Proof of the proposition \ref{kesten1}}
Before the proof, we give some heuristics here, essentially we apply Lemma \ref{lll4}. Firstly we construct a horizontal crossing of length \(\frac{9}{8}l_1\), suppose that \(E^-\) occurs with \(R^-=r',\ s=s'\), \(E^+\) occurs with \(R^+=r'',\ s=s''\). Then \(r'\cup s'\) contains a continuous curve \(t'\) from \((\frac{l_1}{8},Y(r'))\) to the upper edge of \(H\), similarly \(r''\cup s''\) contains a continuous curve \(t''\) from \( (\frac{l_1}{8},Y(r''))\) to the lower edge of \(H\), as \(Y(r')\leq Y(r'')\), the union of \(t'\) and \(t''\) ensure a vertical crossing of \(H\). Thus by introducing another horizontal crossing of \([\frac{l_1}{8},\frac{9}{8}l_1]\times [0,l_2]\), the last will intersect the union of \(s'\) and \(s''\), therefore the union of these crossings will ensure the desired horizontal crossing of \([0,\frac{9}{8}l_1]\times [0,l_2]\) with positive probability.

The same argument can be applied to the box \([\frac{l_1}{8},\frac{9}{8}l_1]\times [0,l_2]\), this time we construct a horizontal crossing of length \(\frac{10}{8}l_1\) and by recurrence one can prolong as desired. 

\begin{proof}

Suppose that \(E^-,E^+\) both occur, thus \((r'\cup s'\cup r''\cup
s'')\cap H\) contains an occupied vertical crossing \(\psi\) of \(H\).

Let \(L_3=\{x=\frac{l_1}{8}+l_1\}
\), in order to have an occupied horizontal crossing of
\([0,\frac{l_1}{8}+l_1]\times[0,l_2]\), we distinguish two cases:

\begin{enumerate}
\item If \(\psi\) contains one point on or to the right of \(L_3\),
  then \(r'\cup s'\cup r''\cup s''\) contains an occupied horizontal
  crossing of \([0,\frac{l_1}{8}+l_1]\times[0,l_2]\).

\begin{figure}[!h]
\label{Figure 14}
\centering
\includegraphics[width=.8\textwidth]{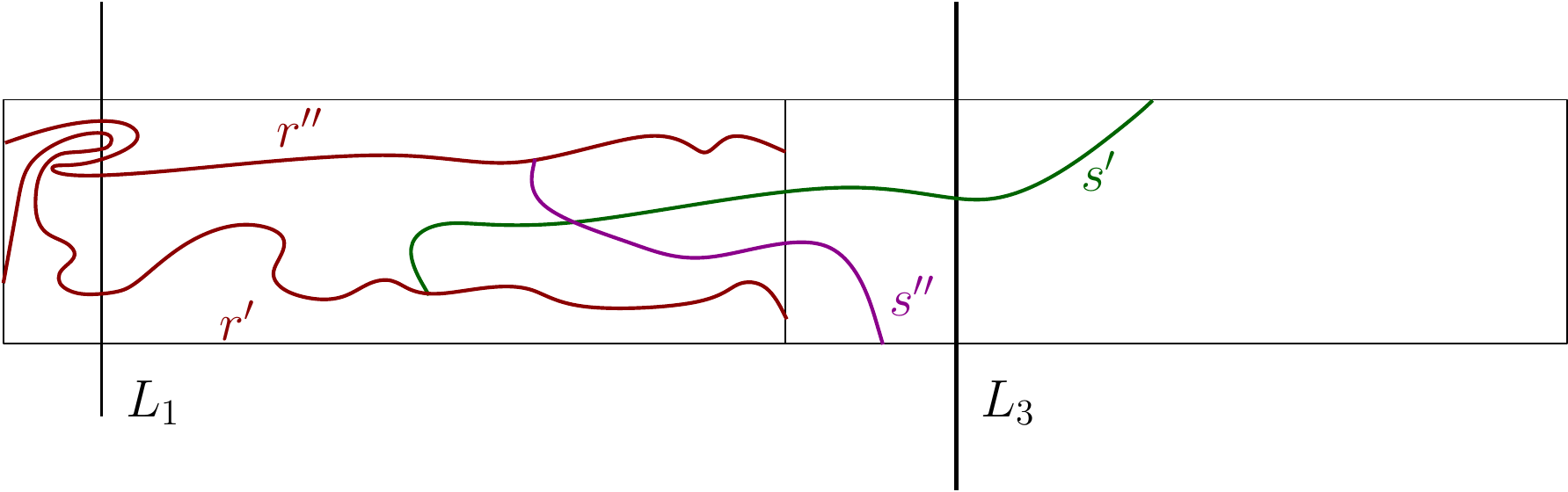} 
\caption{\it \(\psi \text{ contains one point on or to the left of }L_3\).}
\end{figure}

\item If \(\psi\) lies entirely to the left of \(L_3\), by bringing 
  in an occupied horizontal crossing \(r'''\) of
  \([\frac{l_1}{8},\frac{l_1}{8}+l_1]\times[0,l_2]\), then  \(r'\cup
  s'\cup r''\cup s''\cup r'''\) contains an occupied horizontal
  crossing of \([0,\frac{l_1}{8}+l_1]\times[0,l_2]\).

\begin{figure}[!h]
\label{Figure 15}
\centering
\includegraphics[width=.8\textwidth]{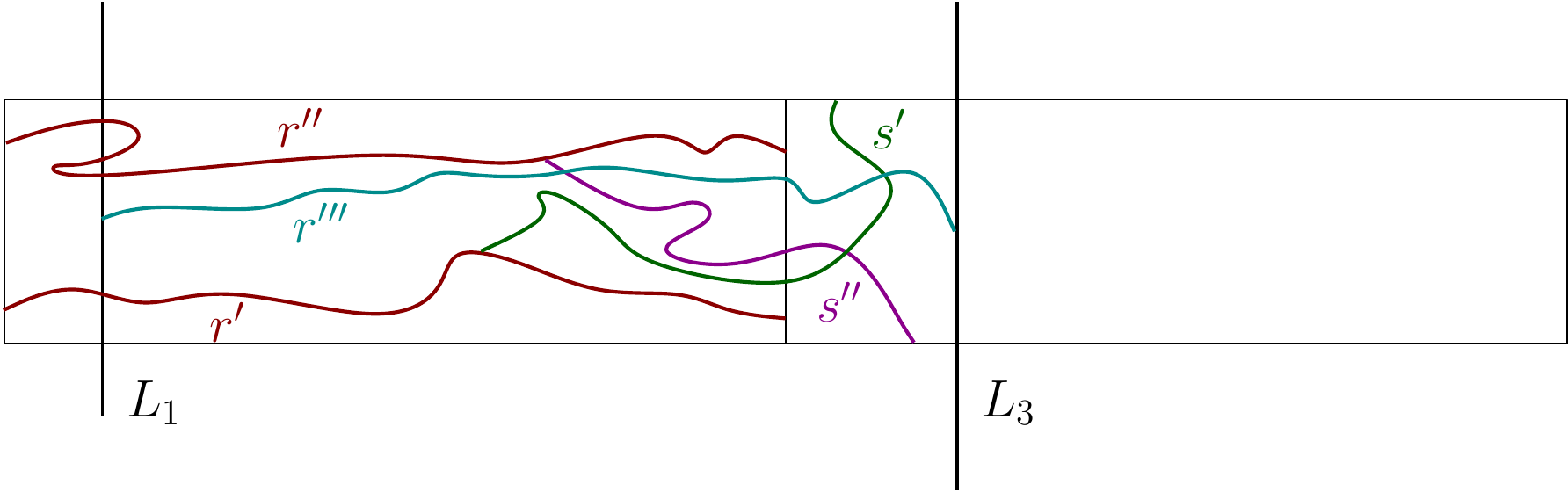} 
\caption{\it \(\psi\) lies to the left of \(L_3\), an occupied horizontal crossing \(r'''\) was introduced to ensure a crossing of \([0,\frac{l_1}{8}+l_1]\times[0,l_2]\)  .}
\end{figure}

\end{enumerate}  

By the FKG inequality, \eqref{eq:7} and \eqref{eq:8} we have 

\begin{equation}
  \label{eq:11}
  \P_p[\C_h(\frac{1}{8}l_1+l_1,l_2)]\geq (1-\sqrt{1-\delta_1})^2(1-\sqrt{1-\delta_2})^2\P_p[\C_h(l_1,l_2)].
\end{equation}

One can repeat this to obtain lower bounds for
\(\P_p[\C_h(\frac{jl_1}{8},l_2)]\) where \(j\geq 9\).
\end{proof}

It seems we are close to the result. Still, in order to complete the
proof of the main theorem, one should relax the condition \(l_3<2l_1\). In fact we
need the case \(l_3=4l_1\).

\subsection{Step four: Lemma \ref{lem2}}
The following lemma will serve to diminish \(l_3\), in other words, the lemma states that either we have a vertical crossing of \([0,\tilde{l_3}]\times [0,l_2]\) with \(\tilde{l_3}\) less than \(\frac{7}{4}l_1\), or we have a vertical and a horizontal crossing of \([0,\tilde{l_3}]\times[0,l_2]\) and \([0,\tilde{l_1}]\times[0,l_2]\)  with positive probability respectively, where the new \(\tilde{l_1},\tilde{l_3}\) satisfy the condition \(l_3\leq \frac{7}{4}l_1\) of Proposition \ref{kesten1}.

The idea is to divide the horizontal segment \([0,l_3]\) into pieces, if the width of the vertical crossing is not greater than a certain quantity, then the wished condition is satisfied, otherwise since the crossing is of long width, not only it provides a vertical crossing of certain box, but also it provides certain horizontal crossing of slightly smaller box, and using these two new crossings we can still apply Proposition \ref{kesten1}, see Figure \ref{fig:n}.

\begin{figure}[!h]
  \centering
  \includegraphics[width=.72\textwidth]{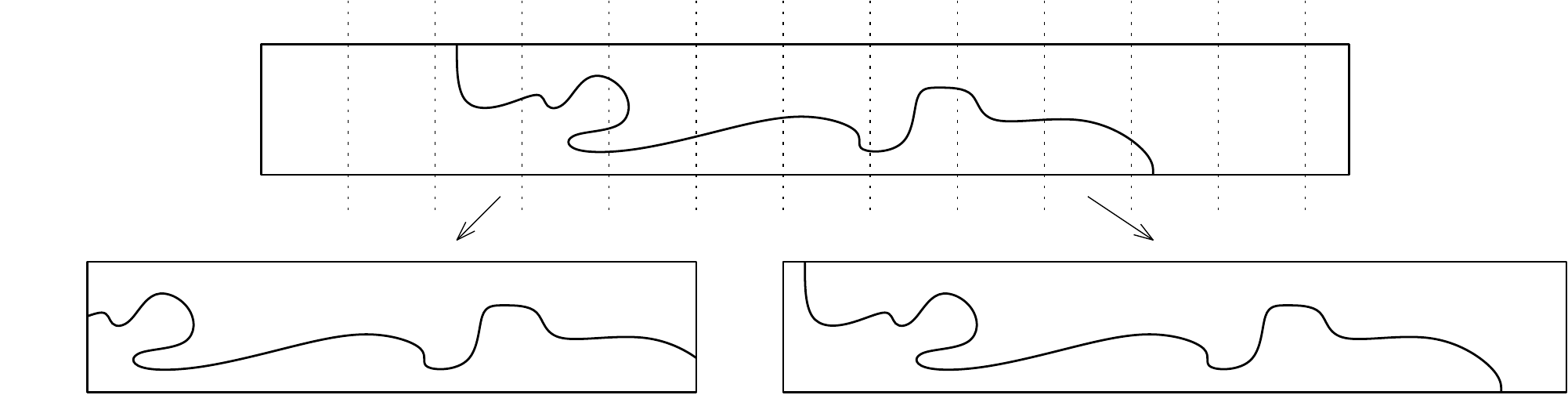}
  \caption{\it If the width of the vertical crossing is long, in fact it provides two crossings of appropriate size.}
  \label{fig:n}
\end{figure}

\begin{lemma}
\label{lem2}
Assume that \(\P_p[\C_v(l_3,l_2)]\geq \delta_2\), then there exists \(\delta_3>0\) such that one of the followings holds:
\begin{itemize}
\item \(\exists \tilde{l}_3\leq \frac{7}{4} l_1\), \(\P_p[\C_v(\tilde{l}_3,l_2)]\geq \delta_3\),
\item \(\exists \tilde{l}_1,\tilde{l}_3\) such that \(\tilde{l}_3\leq\frac{7}{4}\tilde{l}_1\) and \(\P_p[\C_h(\tilde{l}_1,l_2) \text{ and }\C_v(\tilde{l}_3,l_2)]\geq \delta_3\).
\end{itemize}
\end{lemma}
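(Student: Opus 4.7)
The plan is to dichotomize on the horizontal width of a canonical vertical crossing of $R:=[0,l_3]\times[0,l_2]$. Let $\gamma$ denote the leftmost occupied vertical crossing of $R$ (defined by interface exploration, as the lowest horizontal crossing was in Section 4.3), and let $x_{\min}=x_{\min}(\gamma)$ and $x_{\max}=x_{\max}(\gamma)$ be its smallest and largest horizontal coordinates, so that $W:=x_{\max}-x_{\min}$ is its horizontal extent. By hypothesis, $\gamma$ exists with probability at least $\delta_2$, so at least one of the events $\{\gamma\text{ exists},\,W\leq\tfrac{7}{4}l_1\}$ or $\{\gamma\text{ exists},\,W>\tfrac{7}{4}l_1\}$ has probability at least $\delta_2/2$.

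In the narrow case $W\leq\tfrac{7}{4}l_1$, the curve $\gamma$ lies inside some translated strip $[a,a+\tfrac{7}{4}l_1]\times[0,l_2]$ with integer $a\in\{0,1,\dots,l_3\}$. Summing the probabilities of these $O(l_3)$ alternatives, some $a^\ast$ satisfies $\P_p[\C_v([a^\ast,a^\ast+\tfrac{7}{4}l_1],[0,l_2])]\geq\delta_2/(2(l_3+1))$; translation invariance then yields the first bullet with $\tilde{l}_3=\tfrac{7}{4}l_1$ and $\delta_3=\delta_2/(2(l_3+1))$.

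In the wide case $W>\tfrac{7}{4}l_1$, partition the range $(\tfrac{7}{4}l_1,l_3]$ into multiplicative buckets $I_k=((\tfrac{7}{4})^k l_1,(\tfrac{7}{4})^{k+1}l_1]$ for $k=1,\dots,K$ with $K=O(\log(l_3/l_1))$. Pigeonhole gives some $k^\ast$ with $\P_p[W\in I_{k^\ast}]\geq\delta_2/(2K)$. Choose integers $\tilde{l}_1=\lceil(\tfrac{7}{4})^{k^\ast}l_1\rceil$ and $\tilde{l}_3=\lfloor(\tfrac{7}{4})^{k^\ast+1}l_1\rfloor$, slightly shrinking the bucket to absorb the rounding so that $\tilde{l}_3\leq\tfrac{7}{4}\tilde{l}_1$. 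On the event $W\in I_{k^\ast}$: (i) $\gamma$ fits inside $[x_{\min},x_{\min}+\tilde{l}_3]\times[0,l_2]$ and is therefore a vertical crossing of that rectangle; (ii) since $W\geq\tilde{l}_1$, the sub-arc of $\gamma$ from its leftmost column up to the first time it meets the vertical line $x=x_{\min}+\tilde{l}_1$ is contained in $[x_{\min},x_{\min}+\tilde{l}_1]\times[0,l_2]$ and gives a horizontal crossing of that smaller rectangle. Both events occur on the same configuration; a union bound over the $O(l_3)$ integer values of $x_{\min}$ combined with translation invariance yields the second bullet.

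The main technical point is the sub-arc construction in (ii). On $\Z^2$ planarity makes the argument immediate, but on $\Z^{2,\ast}$ one must verify that $\gamma$ cannot ``jump'' over an intermediate vertical column without intersecting it, so that its first visit to $x=x_{\min}+\tilde{l}_1$ is well-defined and the resulting sub-arc is a bona fide horizontal crossing. This is precisely the matching-pair connectivity property illustrated in Figure \ref{ZstarI}, and it is the step at which one must check the argument on both graphs of the pair on equal footing. The remaining issues (integer rounding of bucket endpoints, choice of canonical crossing) are routine.
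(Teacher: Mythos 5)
Your overall strategy---localizing the horizontal extent of a vertical crossing of \([0,l_3]\times[0,l_2]\) and splitting into a narrow case and a wide case---is the same as the paper's, and the geometry of your wide case (a crossing of extent roughly \(W\) simultaneously yields a vertical crossing of a box of width a bit more than \(W\) and a horizontal crossing of a box of width a bit less than \(W\), with ratio safely below \(\frac{7}{4}\)) matches the paper's case \(j_0-i_0\geq 4\). The caveat you raise about extracting the horizontal sub-crossing on \(\Z^{2,*}\) is also the right one; it is exactly what the matching-pair intersection property of Figure \ref{ZstarI} is for.

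There is, however, a genuine quantitative gap that makes your version of the lemma unusable downstream. In both of your cases you pin down the \emph{position} of the crossing by a union bound over the \(O(l_3)\) integer translates (over \(a^\ast\) in the narrow case, over \(x_{\min}\) in the wide case), so your \(\delta_3\) is of order \(\delta_2/l_3\). The lemma is applied with \(l_1,l_2,l_3\) all proportional to the RSW scale \(n\), so this \(\delta_3\) tends to \(0\) as \(n\to\infty\); feeding it into Proposition \ref{kesten1} then gives a crossing bound that also degenerates, and the constant in Theorem \ref{kesten2} is no longer uniform in \(n\), which defeats the purpose. (For fixed finite boxes the bare assertion ``there exists \(\delta_3>0\)'' is vacuous; the content of the lemma is that \(\delta_3\) depends only on \(\delta_2\) and the ratio \(t=l_3/l_1\).) The paper avoids this loss by partitioning \([0,l_3]\) into only \(n=\lceil l_3/(\lambda l_1)\rceil\leq\lceil t/\lambda\rceil\) coarse cells of width \(\lambda l_1\) and recording only which cells contain \(x_l(r)\) and \(x_r(r)\): the events \(E(i,j)\) number \(\binom{n}{2}\), a quantity controlled by \(t\) alone, and once \((i_0,j_0)\) is chosen the two relevant rectangles are \emph{specific} translates, so a single application of periodicity suffices and no positional union bound is needed. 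Moreover, instead of plain pigeonhole the paper applies the generalized Harris inequality \(\prod(1-\P_p[E(i,j)])\leq 1-\P_p[\cup E(i,j)]\) to these increasing events, giving \(\delta_3=1-(1-\delta_2)^{\binom{n}{2}^{-1}}\). Your multiplicative bucketing of \(W\) into \(O(\log(l_3/l_1))\) scales is harmless, but you must replace the translate-by-translate union bound with a coarse-cell localization of this kind for the lemma to retain the uniformity the RSW argument requires.
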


\begin{remark}
  This lemma does not depend on symmetry, the role of the
  horizontal and vertical direction may be interchanged. 
\end{remark}

\begin{proof}
  Let \(r\) be an occupied vertical crossing of
  \([0,l_3]\times[0,l_2]\). Denoted by \(x_l(r),x_r(r)\), the minimum and
  maximum value, respectively, of the first coordinates of sites 
  in \(r\). These are the most left and the most right site that \(r\) visited, see Figure \ref{Figure 16}.

 Let \(\lambda\) be a fixed constant, consider the subdivision of mesh \(\lambda l_1\) of segment \([0,l_3]\): 
\[I_k=\left \{ 
\begin{array}{ll}
\mbox{\([(k-1)\lambda l_1,k\lambda l_1)\)} &\mbox{ if \(1\leq k<n\) }\\
\mbox{\([(n-1)\lambda l_1,l_3]\)} &\mbox{ if \(k=n\). } 
\end{array} \right.\]
Where \(n=\lceil \frac{l_3}{\lambda l_1} \rceil\).

Let \(E(i,j)\) denotes the event: \\
{\it \{ there exists occupied vertical crossing \(r\) of \([0,l_3]\times[0,l_2]\) with \(x_l(r)\in I_i\) and \(x_r(r)\in I_j\). \} }

\begin{figure}[!h]
\centering
\includegraphics[width=\textwidth]{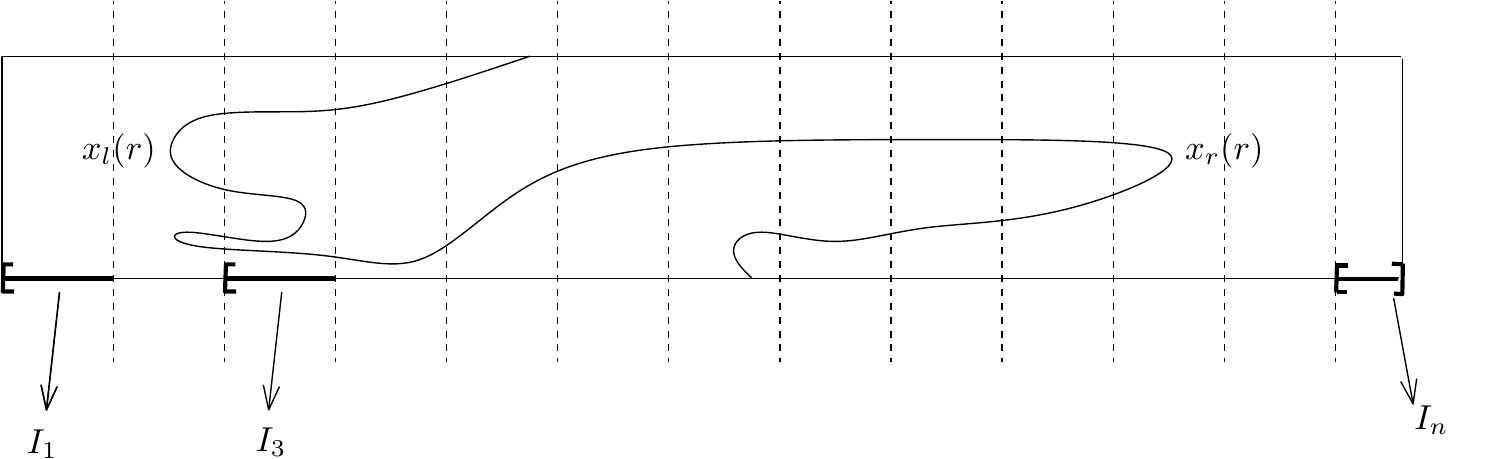}
\caption{\it \(x_l,x_r\) respectively the highest and lowest point of \(r\).}
\label{Figure 16}
\end{figure}

Any vertical crossing \(r\) of \([0,l_3]\times[0,l_2]\) must satisfy \(0\leq
x_l(r)\leq x_r(r)\leq l_3\), thus if there exists an occupied
vertical crossing of  \([0,l_3]\times[0,l_2]\) then one of the
events \(E(i,j), 1\leq i\leq j\leq n\) must occur. 

As the events \(E(i,j)\) are increasing, we can apply a more general form of the Harris
Inequality:  
\[ \prod_{k=1}^n(1-\P_p[E_k])\leq 1-\P_p[\cup_kE_k].\]
Therefore, 
\begin{equation}
  \label{eq:12}
  \prod_{1\leq i\leq j\leq n}(1-\P_p[E(i,j)])\leq 1-\P_p[\cup
  E(i,j)]\leq 1-\delta_1.
\end{equation}

The union and product in \eqref{eq:12} run over \(1\leq i\leq j\leq n\)
and hence contain \({n \choose 2}\) terms. Therefore, for some
\(1\leq i_0\leq j_0\leq n\) we have 
\begin{equation}
  \label{eq:13}
  \P_p[E(i_0,j_0)]\geq \delta_3=1-(1-\delta_1)^{{n\choose 2}^{-1}}.
\end{equation}

Now let us distinguish two case,  
\begin{enumerate}
\item If \( j_0-i_0\geq 4 \), then there exists an occupied vertical
  crossing \(r\) of \([0,l_3]\times[0,l_2]\) with 
\[ x_r(r)-x_l(r)\leq (j_0-i_0+1)\lambda l_1.\]

Thus by periodicity, \(r\) can be considered as an occupied vertical crossing of \[[0,(j_0-i_0+1)\lambda l_1]\times[0,l_2].\]

Next \(x_l(r)\leq i_0l_1< (j_0-1)l_1 \leq x_r(r)\) implies that there is an
occupied horizontal crossing of \([0,(j_0-i_0-1)\lambda l_1]\times[0,l_2 ]\).

Thus  \(\P_p[\C_v((j_0-i_0+1)\lambda l_1,l_2 )\text{ and } \C_h((j_0-i_0-1)\lambda l_1,l_2)] \geq \delta_3
\).

Let \(\tilde{l}_1=(j_0-i_0-1)\lambda l_1\) and \(\tilde{l}_3=(j_0-i_0+1)\lambda l_1\), now \(\frac{7}{4}\tilde{l}_1 \geq \tilde{l}_3\) thus the desired condition is satisfied.
\item If  \(j_0-i_0\leq 3\), then the first part of the above
  argument shows that \[\P_p[\C_v((j_0-i_0+1)\lambda l_1,l_2)]\geq \delta_3,\] and hence \(\tilde{l}_3=(j_0-i_0+1)\lambda l_1,\) let \(\lambda\leq \frac{7}{16}\) leads to the conclusion.
\end{enumerate}

\end{proof}

\subsection{Proof of Theorem  \ref{kesten2}}

\begin{proof}

By Proposition \ref{kesten1}, we may assume \(l_3\geq \frac{7}{4}l_1\). 
Now by Lemma \ref{lem2}, one of the following statements holds: 
\begin{itemize}
\item there exists \(\tilde{l}_3\leq \frac{7}{4}l_1\), such that \(\P_p[\C_v(\tilde{l}_3,l_2)]\geq \delta_3\).
\item there exists \(\tilde{l}_3,\tilde{l}_1\) and \(\tilde{l}_3\leq \frac{7}{4}\tilde{l}_1\) such that \(\P_p[\C_h(\tilde{l}_1,l_2)]\geq \delta_3\) and \(\P_p[\C_v(\tilde{l}_3,l_2)]\geq \delta_3\).
\end{itemize}

In either case we can apply Proposition \ref{kesten1}, hence  \[ \P_p[\C_h(kl_1,l_2)]\geq c>0.\]

\end{proof}

\section{Consequences}

Now the RSW Theorem is proved by combining Theorem \ref{kesten2} and  Theorem\ref{lem0}. 
Here we give a result on \(\Z^2\) site percolation model  which can be deduced from the RSW Theorem.
\subsection{No infinite cluster at criticality.}
Consider site percolation on \(\Z^2\), it is known that the critical value of \(p\), denoted \(p_c\), is well defined and \(p_c>\frac{1}{2}\).

A classic corollary of the RSW Theorem is 
\begin{proposition}
  There is no infinite cluster at criticality for site percolation on \(\Z^2\).
\end{proposition}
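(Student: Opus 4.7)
The plan is to show $\theta_{\Z^2}(p_c) = 0$, from which a countable union bound over $\Z^2$ yields the absence of any infinite cluster almost surely. The strategy is the classical nested-circuit argument: using the RSW Theorem \ref{th.main} on the matching graph $\Z^{2,*}$, I will produce almost surely infinitely many vacant $\Z^{2,*}$-circuits around the origin, each of which confines the occupied $\Z^2$-cluster of the origin. Since $p_c(\Z^2) + p_c(\Z^{2,*}) = 1$, at $p = p_c(\Z^2)$ the vacant sites form a critical site percolation configuration on $\Z^{2,*}$, to which Theorem \ref{th.main} applies (the proof treats both graphs of the matching pair symmetrically).

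Taking $\lambda = 1/3$ in Theorem \ref{th.main} yields a constant $c_0 > 0$ such that every $6n \times 2n$ rectangle admits a vacant $\Z^{2,*}$-crossing in the long direction with probability at least $c_0$. For each integer $k \geq 1$, consider the square annulus $A_k = [-3^{k+1}, 3^{k+1}]^2 \setminus [-3^k, 3^k]^2$. It is surrounded by a vacant $\Z^{2,*}$-circuit as soon as four rectangles (top, bottom, left, right, each of aspect ratio $3$) lying inside $A_k$ are crossed the long way by vacant sites; these four events are increasing in the vacancy configuration, so the Harris/FKG inequality gives a vacant circuit around $[-3^k,3^k]^2$ with probability at least $c_0^4$. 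The annuli $A_k$ are pairwise disjoint, so the circuit events depend on disjoint sets of sites and are therefore independent, and the second Borel--Cantelli lemma yields infinitely many such vacant circuits almost surely.

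To conclude, I invoke the matching pair planarity: a vacant $\Z^{2,*}$-circuit surrounding the origin is a Jordan curve in $\R^2$, and any occupied $\Z^2$-path escaping to infinity would have to cross it. Because $\Z^{2,*}$ contains the two diagonals of each unit face, a $\Z^2$-edge cannot slip between two $\Z^{2,*}$-adjacent sites of the circuit without having one endpoint on the circuit (hence vacant). So the occupied cluster of the origin is almost surely finite, giving $\theta_{\Z^2}(p_c) = 0$. The main obstacle I expect is this last topological separation step: the other ingredients (RSW, FKG, Borel--Cantelli) are routine once Theorem \ref{th.main} is in hand, but the separation relies crucially on the matching pair structure, and would fail if $\Z^{2,*}$ were replaced by $\Z^2$ itself (a $\Z^2$-circuit of vacant sites can indeed be bypassed by a $\Z^2$-path through a pair of diagonally adjacent occupied sites).
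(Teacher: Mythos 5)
Your proposal is correct and follows essentially the same route as the paper: four vacant $\Z^{2,*}$-crossings of $n\times 3n$ rectangles in disjoint square annuli, glued by FKG into circuits of probability at least $c^4$, with independence across annuli and Borel--Cantelli producing a vacant circuit around the origin almost surely. The extra care you take with the topological separation step and with why the vacant configuration is critical on $\Z^{2,*}$ is a welcome elaboration of points the paper leaves implicit, but the argument is the same.
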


\begin{proof}

By periodicity, it is equivalent to say that the origin, denoted \(0\), is not connected to infinity a.s.

Note that the existence of a vacant circuit in \(\Z^{2,*}\) surrounding the origin prohibit \(0\) from connecting to the outside of this circuit. Consider the square annulus as shown in Figure \ref{Figure 17}.

\begin{figure}[!h]
\centering
\includegraphics[width=.8\textwidth]{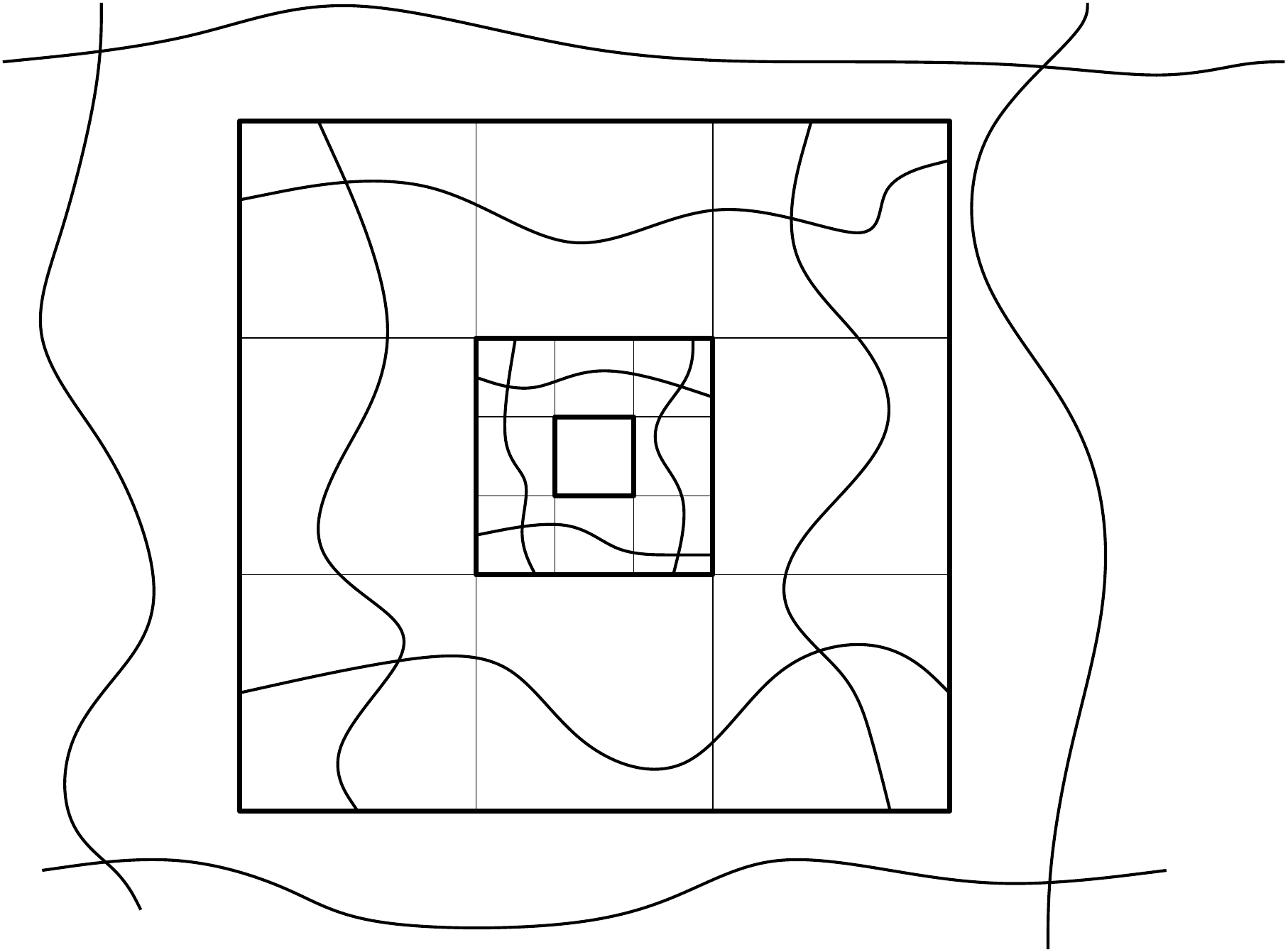}
\caption{\it In each annulus \(A_n\), we can construct 4 crossings that surround the origin.}
\label{Figure 17}
\end{figure}

In every annulus we construct four crossings of rectangles of size \(n\times 3n\), these 4 crossings must form a circuit surrounding the origin, and these events of circuit are independent in different annulus.

By the FKG Inequality, 
\begin{align*}
&\P[ \text{ there is a vacant dual circuit in annulus }A_n]\\
&\geq \P[\text{ there is a vacant dual crossing in the rectangle of size } n\times 3n]^4 \\
&\geq c^4.
\end{align*}

Now as \(\sum_n \P[ \text{ there is a vacant dual circuit in annulus }A_n] =\infty\), apply the Borel-Cantelli Lemma to conclude that there exists a vacant dual circuit surrounding \(0\) a.s.
\end{proof}

{\bf Acknowledgments:}

I would like to thank Christophe Garban for his constant support in this project as well as
Hugo Duminil-Copin who suggested to us the strategy to prove our main Theorem. I am also indebted to Christophe Sabot for a very careful reading of this manuscript.

\nocite{*}
\bibliography{bib}{}
\bibliographystyle{plain}
\end{document}